\title{On the representation ring of the polynomial algebra over a perfect field}
\author{Erik Darp\"o, Martin Herschend}
\newcommand{\edge}[1]{\ar@{-}[#1]}
\newcommand{\iso}{\;\tilde\rightarrow\;}
\newcommand{\spn}{\mathop{\rm span}\nolimits}
\newcommand{\cat}[1]{\mathop{\rm #1}\nolimits}
\newcommand{\modu}{\!-\!\mathop{\rm mod}}
\DeclareMathOperator{\cha}{char}
\newcommand{\latop}[2]{\genfrac{}{}{0pt}{}{#1}{#2}}
\newcommand{\G}[2]{\mathcal{G}(#1\slash #2)}
\newcommand{\eg}{e.g.}
\newcommand{\ie}{i.e.}
\renewcommand{\ge}{\geqslant} \renewcommand{\le}{\leqslant}
\def\<{\langle} \def\>{\rangle}
\def\a{\alpha} \def\l{\lambda}
\def\R{\mathbb{R}}
\def\C{\mathbb{C}}
\newtheorem{thm}{Theorem}
\newtheorem{cor}{Corollary}
\newtheorem{pro}{Proposition}
\newtheorem{lem}{Lemma}
\begin{document}
\date{}
\maketitle

\begin{abstract}
\noindent We consider the tensor product of modules over the polynomial algebra corresponding to the usual tensor product of linear operators. We present a general description of the representation ring in case the ground field $k$ is perfect. It is made explicit in the special cases when $k$ is real closed respectively algebraically closed. Furthermore, we discuss the generalisation of this problem to representations of quivers. In particular the representation ring of quivers of extended Dynkin type $\tilde{\mathbb{A}}$ is provided.
\end{abstract}

\noindent 
Keywords: representation ring, polynomial algebra, quiver representation, tensor product, Clebsch-Gordan problem.

\section{Introduction}
Let $k$ be a field. We consider the polynomial algebra $k[x]$ as a bialgebra with comultiplication $k[x] \rightarrow k[x]\otimes k[x]$, defined by $x \mapsto x\otimes x$. This defines a tensor product on the category of finite-dimensional left modules of $k[x]$. The objects in this category correspond to linear operators on finite-dimensional vector spaces and their tensor product is given by the usual tensor product of linear operators.

By the structure theorem for finitely generated modules over principal ideal domains
we have the following result. 

\begin{thm}\label{class}
The modules $k[x]/f(x)^s$, where $s$ is a positive integer and $f(x)\in k[x]$ is
irreducible and monic, classify all indecomposable finite-dimensional $k[x]$-modules up to isomorphism.
\end{thm}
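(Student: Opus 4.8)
The plan is to obtain the statement from the structure theorem for finitely generated modules over a principal ideal domain, which applies because $k[x]$ is a PID. The first point is that a finite-dimensional $k[x]$-module $M$ is automatically finitely generated — any $k$-basis of it is a generating set over $k[x]$ — and that it is a torsion module: since $k[x]$ is infinite-dimensional over $k$, no free summand can occur, or equivalently $M$ is annihilated by the minimal polynomial of the operator by which $x$ acts. Hence the elementary-divisor form of the structure theorem yields a decomposition $M \cong \bigoplus_{j=1}^{n} k[x]/(p_j(x)^{s_j})$ with each $p_j$ monic irreducible and each $s_j \ge 1$. In particular, every indecomposable finite-dimensional module is isomorphic to a single summand of this shape.

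It then remains to verify that each $k[x]/f(x)^s$ is genuinely indecomposable and that the pair $(f,s)$ is recovered from the isomorphism type. For indecomposability I would compute the endomorphism ring: as $k[x]/f^s$ is cyclic with annihilator $(f^s)$, one has $\mathrm{End}_{k[x]}\!\big(k[x]/f^s\big) \cong k[x]/(f^s)$, and this ring is local — its ideals are exactly the $(f^i)/(f^s)$ for $0 \le i \le s$, so $(f)/(f^s)$ is its unique maximal ideal — whence it has no idempotents other than $0$ and $1$, and a decomposable module cannot have such an endomorphism ring. For uniqueness, the annihilator of $k[x]/f^s$ is $(f^s)$, which determines $f$ (being monic) and $s$; together with the Krull--Schmidt theorem this shows that the multiset of pairs occurring in the decomposition of $M$ is an isomorphism invariant, so distinct pairs $(f,s)$ give non-isomorphic modules.

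As for difficulty, there is no deep obstacle: the result is classical PID theory. The one step that calls for a real argument rather than a citation is the indecomposability of $k[x]/f^s$, and that is settled by the locality of its endomorphism ring; the rest is bookkeeping around the structure theorem, together with the elementary observation relating finite-dimensionality over $k$ to being a finitely generated torsion module over $k[x]$. Note that no hypothesis on $k$ beyond being a field is needed here.
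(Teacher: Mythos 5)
Your proof is correct and follows exactly the route the paper takes: the paper simply invokes the structure theorem for finitely generated modules over a PID without further argument, and your write-up supplies the standard details of that derivation (finite-dimensional implies finitely generated torsion, elementary divisors, indecomposability via the local endomorphism ring $k[x]/(f^s)$, and uniqueness via annihilators and Krull--Schmidt). Nothing is missing.
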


By Theorem \ref{class}, it suffices to decompose $k[x]/f(x)^s \otimes k[x]/g(x)^t$ for all positive integers $s,t$ and irreducible monic polynomials $f,g$, to determine $V\otimes W$ for all finite-dimensional $k[x]$-modules $V$ and $W$, \ie\ solve the Clebsch-Gordan problem for $k[x]$.

If $k$ is algebraically closed this amounts to finding the Jordan normal form for the
Kronecker product of any two Jordan blocks. In case $k$ has characteristic zero this
problem has been solved by many authors. The first to our knowledge is Aitken
\cite{aitken35}. However, this reference seems relatively unknown as there are several
independent solutions \eg\ by Huppert \cite{huppert90}, and by Martsinkovsky and Vlassov
\cite{MaVl}.

As is noted in \cite{iwamatsu07a}, this problem appears naturally in the construction of graded Frobenius algebras by Wakamatsu in \cite{wakamatsu03}.
Denote by $J_{\l}(l)$ the Jordan block of size $l \in \mathbb{N}\setminus\{0\}$ and
eigenvalue $\l \in k$. For any two matrices $A$ and $B$ we write $A \sim B$ if $A = TBT^{-1}$ for some invertible matrix $T$.

\begin{thm}\label{jordan}
For all $\l,\mu \in k\setminus\{0\}$ and positive integers $l,m$ the following fomulae hold:
\begin{enumerate}
\item $J_{\l}(l) \otimes J_{\mu}(m) \sim \bigoplus_{i=0}^{l-1} J_{\l\mu}(l+m-2i-1)$ if $l\le m$
and $\cha k=0$,
\item $J_{\l}(l) \otimes J_{0}(m) \sim lJ_{0}(m)$,
\item $J_{0}(l) \otimes J_{0}(m) \sim (m-l+1)J_{0}(l) \oplus \bigoplus_{i=1}^{l-1}2J_{0}(i)$ if $l\le m$.
\end{enumerate}
\end{thm}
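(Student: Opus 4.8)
The plan is to handle parts (2) and (3) by direct computations with ranks of powers, and to obtain part (1) by reducing it, through $\log$ and $\exp$, to the classical Clebsch--Gordan decomposition for the \emph{additive} tensor product of $\mathfrak{sl}_2$-modules. Two elementary facts will be used throughout: that $\rank(A\otimes B)=\rank(A)\rank(B)$, whence $\rank\bigl((A\otimes B)^k\bigr)=\rank(A^k)\rank(B^k)$ since $(A\otimes B)^k=A^k\otimes B^k$; and that the Jordan type of a nilpotent operator $T$ on a finite-dimensional space is recovered from the ranks of its powers, the number of Jordan blocks of size $\ge k$ being $\rank(T^{k-1})-\rank(T^k)$.

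For (2): since $J_0(m)^m=0$ we have $(J_\l(l)\otimes J_0(m))^m=0$, so $T:=J_\l(l)\otimes J_0(m)$ is nilpotent; as $\l\ne 0$ the power $J_\l(l)^k$ is invertible, so $\rank(T^k)=l\max(m-k,0)$, and the number of blocks of size $\ge k$ works out to $l$ for $1\le k\le m$ and $0$ thereafter, forcing $T$ to consist of exactly $l$ blocks, each of size $m$. Part (3) is the same argument with $\rank\bigl((J_0(l)\otimes J_0(m))^k\bigr)=\max(l-k,0)\max(m-k,0)$: for $l\le m$ the number of blocks of size $\ge k$ is $(l-k+1)(m-k+1)-(l-k)(m-k)=l+m-2k+1$ for $1\le k\le l$, giving $2$ blocks of each size $1,\dots,l-1$ and $m-l+1$ of size $l$ (and the total dimension is indeed $lm$).

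For (1) I would first reduce to the case $\l=\mu=1$. Writing $J_\l(l)=\l(I+\l^{-1}N)$ with $N=J_0(l)$, a diagonal conjugation carries $I+\l^{-1}N$ to $I+N=J_1(l)$, and similarly for $J_\mu(m)$, so $J_\l(l)\otimes J_\mu(m)\sim\l\mu\,(J_1(l)\otimes J_1(m))$; since $\l\mu J_1(n)=\l\mu I+\l\mu N\sim J_{\l\mu}(n)$, it suffices to show $J_1(l)\otimes J_1(m)\sim\bigoplus_{i=0}^{l-1}J_1(l+m-2i-1)$ for $l\le m$. The operator $J_1(l)\otimes J_1(m)$ is unipotent, and $J_1(l)\otimes I$ and $I\otimes J_1(m)$ are commuting unipotents with product $J_1(l)\otimes J_1(m)$; hence in characteristic $0$, where $\log$ and $\exp$ make sense as polynomial maps on unipotent resp.\ nilpotent matrices and $\log$ of a product of commuting unipotents is the sum of the logarithms, we get $\log\bigl(J_1(l)\otimes J_1(m)\bigr)=\log(J_1(l))\otimes I+I\otimes\log(J_1(m))$. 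Each $\log(J_1(n))=N-\tfrac{1}{2}N^2+\cdots$ is nilpotent with one-dimensional kernel, hence a single Jordan block of size $n$, so $\log(J_1(n))\sim J_0(n)$ and therefore $\log(J_1(l)\otimes J_1(m))\sim J_0(l)\otimes I+I\otimes J_0(m)$; applying $\exp$ (which respects conjugacy and sends $J_0(n)$ to a conjugate of $J_1(n)$) reduces everything to the classical identity $J_0(l)\otimes I+I\otimes J_0(m)\sim\bigoplus_{i=0}^{l-1}J_0(l+m-2i-1)$ for $l\le m$.

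That last identity --- the decomposition of a tensor product of two irreducible $\mathfrak{sl}_2$-modules, read off from the raising operator --- is the main obstacle, and it is the one point where $\cha k=0$ is genuinely indispensable (it fails over fields of small positive characteristic). I would prove it either directly, by computing $\rank\bigl((J_0(l)\otimes I+I\otimes J_0(m))^k\bigr)$ from the binomial expansion $(J_0(l)\otimes I+I\otimes J_0(m))^k=\sum_{j}\binom{k}{j}J_0(l)^j\otimes J_0(m)^{k-j}$, the invertibility of the binomial coefficients being exactly the characteristic-$0$ input, or else by invoking one of the known solutions \cite{aitken35,huppert90,MaVl}. Everything else is formal manipulation of nilpotent and unipotent matrices together with bookkeeping of Jordan block sizes.
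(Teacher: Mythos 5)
Your proposal is correct, but it cannot be compared line-by-line with ``the paper's proof'' for a simple reason: the paper offers no proof of this theorem at all. Parts (2) and (3) are stated as standard facts, and part (1) is explicitly attributed to the literature (Aitken, Huppert, Martsinkovsky--Vlassov); the theorem is then used as an input for Proposition~\ref{zero} and the discussion preceding Lemma~\ref{separate}. So your argument is a genuine supplement rather than an alternative. Your treatment of (2) and (3) via $\rank\bigl((A\otimes B)^j\bigr)=\rank(A^j)\rank(B^j)$ and the block-count formula $\rank(T^{j-1})-\rank(T^j)$ is complete and checks out (the arithmetic $(l-j+1)(m-j+1)-(l-j)(m-j)=l+m-2j+1$ does give $2$ blocks of each size $1,\dots,l-1$ and $m-l+1$ of size $l$). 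Your reduction of (1) to $\l=\mu=1$ by diagonal rescaling is exactly the manipulation the paper itself performs just before Lemma~\ref{separate}, and the $\log$/$\exp$ passage between the multiplicative problem $J_1(l)\otimes J_1(m)$ and the additive $\mathfrak{sl}_2$-type problem $J_0(l)\otimes\mathbb{I}+\mathbb{I}\otimes J_0(m)$ is sound in characteristic zero: both series terminate on unipotent/nilpotent arguments, $\log$ turns commuting unipotents' products into sums, and both maps preserve conjugacy and single-block structure since $\log(J_1(n))=N(\mathbb{I}-\tfrac{1}{2}N+\cdots)$ has rank $n-1$. The one place you stop short is the terminal identity $J_0(l)\otimes\mathbb{I}+\mathbb{I}\otimes J_0(m)\sim\bigoplus_{i=0}^{l-1}J_0(l+m-2i-1)$, which you flag and propose to prove by the binomial rank computation or to cite; since the paper cites the entire theorem, citing this classical sub-lemma is entirely defensible, though carrying out the rank computation would make your proof fully self-contained and would isolate precisely where $\cha k=0$ enters --- correctly identified by you as the invertibility of the binomial coefficients, which is indeed where the formula breaks in positive characteristic.
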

In positive characteristic we know of no explicit formula for the decomposition of $J_{\l}(l) \otimes J_{\mu}(m)$ when $\l,\mu \in k\setminus\{0\}$. However, Iima and Iwamatsu in \cite{iwamatsu07a} give an algorithm for computing the decomposition in this case. Also, we will later present an alternative method for achieving this.

The solution to the Clebsch-Gordan problem is encoded in the representation ring $R$ of $k[x]$. As an abelian group $R$ is freely generated by the isoclasses of indecomposable finite-dimensional $k[x]$-modules. For two such isoclasses $[V], [W]$ their product is
\[
[V][W] = \sum_{i\in I} [U_i],
\]
where 
\[
V\otimes W \iso \bigoplus_{i\in I} U_i,
\]
is the decomposition of $V\otimes W$ into indecomposables. The representation ring is commutative with identity element $[k[x]/(x-1)]$. Define the ring morphism
\[
\dim : R \rightarrow \mathbb{Z},
\]
by $\dim ([V]) = \dim V$.

In the present article we give a general description of $R$ for any perfect ground field $k$. Under various restrictions on the ground field this description is then made more explicit.

\section{General description of the representation ring}

From here on, let $K$ be the algebraic closure of $k$.
To each $k[x]$-module $V$ we associate the $K[x]$-module $K \otimes V$. This allows us to
reduce many problems to the algebraically closed case by using the following lemma due to
Noether, see \cite{deuring33}. We include the proof for completeness. 

\begin{lem}\label{extension}
Let $F$ be an algebraic field extension of $k$ and $A$ an associative $k$-algebra with identity. Further let $V$ and $W$ be finite-dimensional $A$-modules. If $F\otimes V$ and $F\otimes W$ are isomorphic as $F\otimes A$-modules, then $V$ and $W$ are isomorphic as $A$-modules.
\end{lem}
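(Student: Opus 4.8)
The statement is: if $F$ is an algebraic extension of $k$, $A$ a unital associative $k$-algebra, $V,W$ finite-dimensional $A$-modules, and $F\otimes_k V\cong F\otimes_k W$ as $F\otimes_k A$-modules, then $V\cong W$ as $A$-modules.

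Key idea: isomorphisms of $A$-modules $V\to W$ form a $k$-subspace $\mathrm{Hom}_A(V,W)$ (an algebraic set, in fact a Zariski-open condition inside the linear space of homomorphisms), and this construction commutes with the field extension.

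Let me think about the proof approach. The plan is to observe that since $V,W$ are finite-dimensional, we may assume $F$ is a finite extension of $k$ — any isomorphism $F\otimes V\to F\otimes W$ has matrix entries in a finitely generated, hence finite, subextension. So WLOG $[F:k]=n<\infty$.

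Now consider $\mathrm{Hom}_A(V,W)$, the space of $A$-module homomorphisms $V\to W$; this is a finite-dimensional $k$-vector space, and one checks the natural map $F\otimes_k\mathrm{Hom}_A(V,W)\to\mathrm{Hom}_{F\otimes A}(F\otimes V,F\otimes W)$ is an isomorphism (it is injective since $F$ is free over $k$; surjective by a dimension count, or directly: an $F\otimes A$-homomorphism is in particular $A$-linear, and averaging over a $k$-basis of $F$ recovers the components). The hypothesis gives an element $\varphi\in\mathrm{Hom}_{F\otimes A}(F\otimes V,F\otimes W)$ that is an isomorphism, i.e.\ $\det\varphi\ne 0$ with respect to chosen bases. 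Write $\varphi=\sum_{i=1}^n e_i\otimes f_i$ with $e_i$ a $k$-basis of $F$ and $f_i\in\mathrm{Hom}_A(V,W)$. Then $\psi_\lambda:=\sum_i\lambda_i f_i$ for $\lambda=(\lambda_i)\in k^n$ ranges over all of $\mathrm{Hom}_A(V,W)$, and $\det\psi_\lambda$ is a polynomial in the $\lambda_i$ with coefficients in $k$.

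The main obstacle — and the one genuinely nontrivial point — is to conclude that this polynomial is not identically zero on $k^n$. We know it is nonzero as an element of $F[\lambda_1,\dots,\lambda_n]$ (specialising $\lambda_i\mapsto e_i$ gives $\det\varphi\ne 0$, working in $F$), hence nonzero as a polynomial in $k[\lambda_1,\dots,\lambda_n]$. If $k$ is infinite, a nonzero polynomial over $k$ has a nonvanishing point in $k^n$, so some $\psi_\lambda$ is an isomorphism $V\to W$ and we are done. If $k$ is finite, this fails and one needs an extra argument: here one can enlarge the set of test elements — instead of restricting to coefficients in $k^n$, note $\mathrm{Hom}_A(V,W)$ already is all of what we need, or pass through the fact that over a finite field we may enlarge $F$ to a splitting-type extension and argue that a finite field is perfect and use a counting/averaging trick; alternatively invoke that $V\cong W$ over $\bar k$ combined with the fact that finitely many finite-dimensional algebras over a finite field have finitely many modules of each dimension. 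The cleanest uniform fix, which I would actually use, is: replace "nonvanishing at a $k$-point" by the observation that if $p\in k[\lambda]$ is nonzero then it is nonzero at some point of a sufficiently large finite extension $k'$ of $k$ — but that reintroduces an extension. So in the write-up I would either (a) assume $k$ infinite is harmless because... no; or (b) handle the finite-field case separately by the Lang-type argument that $F/k$ finite with $k$ finite admits a normal closure and a Frobenius-averaging of $\varphi$ produces a $k$-point. I expect the paper's proof to sidestep this by a slicker formulation, perhaps using that $\mathrm{Hom}_A(V,W)$ and $\mathrm{Iso}$ are defined by rank conditions and that $V\not\cong W$ would force $\mathrm{rank}$ strictly less than $\dim V$ for every $k$-homomorphism, a condition that base-changes up to $F$ — giving a contradiction without ever needing a $k$-point. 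That rank-stability argument is the approach I would ultimately commit to.
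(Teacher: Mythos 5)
Your reduction to a finite subextension $E\supseteq k$ is exactly right and matches the paper. The problem is everything after that: you correctly identify that the determinant trick only closes the argument when $k$ is infinite, and you never actually close the finite-field case. The list of proposed fixes is speculative, and the one you finally commit to --- that $V\not\cong W$ forces every $A$-homomorphism $V\to W$ to have rank $<\dim V$, ``a condition that base-changes up to $F$'' --- is precisely the unproven point, and as a general statement about $k$-subspaces of matrices it is false. For example, over $k=\mathbb{F}_2$ the span of
\[
A_1=\left(\begin{smallmatrix}1&0&0\\0&0&0\\0&0&1\end{smallmatrix}\right),\qquad
A_2=\left(\begin{smallmatrix}0&0&0\\0&1&0\\0&0&1\end{smallmatrix}\right)
\]
consists entirely of singular matrices, since $\det(\lambda_1A_1+\lambda_2A_2)=\lambda_1\lambda_2(\lambda_1+\lambda_2)$ vanishes on all of $\mathbb{F}_2^2$, yet the $\mathbb{F}_4$-span contains invertible matrices. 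So ``every element of the $k$-form is singular'' does not pass to the extension; a correct argument along these lines would have to exploit the module structure of $\mathrm{Hom}_A(V,W)$ in an essential way, which you do not do. As written, your proof is complete only for infinite $k$.

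The paper sidesteps the determinant trick entirely. Once $[E:k]=n<\infty$, restrict scalars: as an $A$-module, $E\otimes V=\bigoplus_{i=1}^n b_i\otimes V\cong nV$ for any $k$-basis $b_1,\dots,b_n$ of $E$, and likewise $E\otimes W\cong nW$. An $E\otimes A$-isomorphism is in particular an $A$-isomorphism, so $nV\cong nW$ as $A$-modules, and the Krull--Schmidt theorem (valid for finite-length modules, since endomorphism rings of indecomposables are local by Fitting's lemma) cancels the multiplicity to give $V\cong W$. This works uniformly for all $k$, finite or infinite. I recommend replacing the second half of your argument with this, or else supplying a genuine finite-field argument such as the counting argument in the classical Noether--Deuring theorem.
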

\begin{proof}
First assume that $F$ is finite-dimensional over $k$ and let $b_1, \ldots b_n$ be a $k$-basis of $F$. As $A$-modules, $F\otimes V = \bigoplus_{i=1}^n b_i\otimes V \iso nV$ and $F \otimes W \iso nW$. Now assume that $F\otimes V$ and $F\otimes W$ are isomorphic as $F\otimes A$-modules. Then they are also isomorphic as $A$-modules and $nV \iso nW$. By the Krull-Schmidt Theorem, $V \iso W$.

Now assume that $F$ is algebraic over $k$, but not necessarily finite. Let $\phi : F\otimes V \iso F\otimes W$ be a $F\otimes A$-module isomorphism and choose $k$-bases in $V$ and $W$. These are also $F$-bases of $F \otimes V$ and $F\otimes W$ respectively. Let $T$ be the matrix corresponding to $\phi$ in the chosen bases. Since $F$ is algebraic over $k$ there is a finite extension $E$ of $k$ that contains the elements of $T$. Hence $T$ defines an $E\otimes A$-module morphism $E\otimes V \iso F\otimes W$. By our result in the previous paragraph, $V \iso W$. 
\end{proof}

\begin{pro}\label{zero}
Let $s$ and $t$ be positive integers and $f(x)\in k[x]$ irreducible with $f(0) \not = 0$. Then the following fomulae hold.
\begin{enumerate}
\item $k[x]/x^s \otimes k[x]/f(x)^t \iso t(\deg f) k[x]/x^s$.
\item $k[x]/x^s \otimes k[x]/x^t \iso (t-s+1)k[x]/x^s \oplus \bigoplus_{i=1}^{s-1}2k[x]/x^i$ if $s \le t$.
\end{enumerate}
\end{pro}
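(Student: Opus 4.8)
The plan is to prove both parts at once, by computing for the module $M := k[x]/x^s \otimes k[x]/h(x)^u$ — where $h(x)^u$ is $f(x)^t$ in case (1) and $x^t$ in case (2) — the numbers $d_j := \dim_k x^j M$ for $j\ge 0$. Since $x^s$ comultiplies to $x^s\otimes x^s$ and $x^s$ annihilates the first tensor factor, it annihilates $M$; thus $M$ is a finite $k[x]$-module killed by $x^s$, so by Theorem \ref{class} and the Krull--Schmidt theorem $M \iso \bigoplus_i k[x]/x^{a_i}$ with every $a_i\le s$. Because $\dim_k x^j(k[x]/x^a)=\max(a-j,0)$, the multiplicities $n_a := \#\{i \mid a_i=a\}$ are recovered from the $d_j$ through the second-difference identity $n_a = d_{a-1}-2d_a+d_{a+1}$. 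Finally, $x$ acts on $M$ as the Kronecker product $A\otimes B$ of the operators $A,B$ on the two tensor factors, whence $d_j = \rank\bigl((A\otimes B)^j\bigr) = \rank(A^j\otimes B^j) = \rank(A^j)\,\rank(B^j)$, with $\rank(A^j)=\max(s-j,0)$. So everything comes down to knowing $\rank(B^j)$.

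In case (1) the hypothesis $f(0)\ne 0$ forces $x$, hence $B$, to be invertible on $k[x]/f(x)^t$: writing $f(x)^t = x\,g(x)+f(0)^t$ exhibits $-g(x)/f(0)^t$ as an inverse of $x$. Thus $\rank(B^j)=t\deg f$ for every $j$, so $d_j = (t\deg f)\max(s-j,0)$, and the second-difference identity makes every multiplicity vanish except $n_s = t\deg f$; this is (1). In case (2), $B$ is similar to the nilpotent Jordan block of size $t$, so $\rank(B^j)=\max(t-j,0)$, and since $s\le t$ this gives $d_j = (s-j)(t-j)$ for $0\le j\le s$ and $d_j=0$ for $j\ge s$. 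The second difference of the quadratic $j\mapsto (s-j)(t-j)$ is the constant $2$, which yields $n_a = 2$ for $1\le a\le s-1$; at $a=s$ a direct evaluation using $d_s=d_{s+1}=0$ gives $n_s = d_{s-1} = t-s+1$; and $n_a=0$ for $a>s$. This is (2).

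I do not anticipate a real obstacle; the only steps needing attention are checking that $x$ acts invertibly on $k[x]/f(x)^t$ (so that all powers of $B$ have full rank) and handling the endpoint $a=s$ separately in (2), where the quadratic expression for $d_j$ ceases to be valid. As an independent check, one may instead reduce to the algebraically closed case: (2) is precisely the translation of Theorem \ref{jordan}(3) under the identification of $k[x]/x^n$ with the $k[x]$-module $(k^n,J_0(n))$, while (1) follows from Theorem \ref{jordan}(2) after using that $K\otimes k[x]/f(x)^t \iso \bigoplus_{j=1}^{\deg f}K[x]/(x-\l_j)^t$ — the roots $\l_j$ being distinct because $k$ is perfect, and nonzero because $f(0)\ne 0$ — and then descending by Lemma \ref{extension}. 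The direct rank argument is shorter, and as a by-product it reproves parts (2) and (3) of Theorem \ref{jordan}.
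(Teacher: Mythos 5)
Your proof is correct, and it takes a genuinely different route from the paper. The paper passes to the algebraic closure $K$, splits $K[x]/f(x)^t$ by the Chinese Remainder Theorem into blocks $K[x]/(x-\l)^{td_\l}$, invokes parts (2) and (3) of Theorem \ref{jordan}, and then descends to $k$ via the Noether--Deuring lemma (Lemma \ref{extension}). You instead work entirely over $k$: since $x$ acts on the tensor product as $A\otimes B$ and $A^s=0$, the product is annihilated by $x^s$ and hence is a sum of modules $k[x]/x^a$ with $a\le s$; the multiplicities are then read off from the ranks $d_j=\rank(A^j)\rank(B^j)$ by the second-difference formula $n_a=d_{a-1}-2d_a+d_{a+1}$, with the invertibility of $x$ on $k[x]/f(x)^t$ (from $f(0)\neq 0$) handling case (1) and the quadratic $d_j=(s-j)(t-j)$ handling case (2). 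All the individual steps check out, including the endpoint $a=s$ and the multiplicativity of rank under Kronecker products. What the paper's approach buys is brevity, since it delegates the combinatorics to the already-stated Theorem \ref{jordan}; what yours buys is self-containedness and slightly greater generality in spirit --- it needs no field extension, no Krull--Schmidt descent, and, as you note, it independently reproves parts (2) and (3) of Theorem \ref{jordan}, which the paper quotes without proof.
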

\begin{proof}
Consider $f(x) \in K[x]$. We may assume that $f$ is monic and thus $f(x)= \prod_{\l\in\Lambda}
(x-\l)^{d_\l}$ for some finite subset $\Lambda\subset K\setminus\{0\}$, $d_\l\in \mathbb{N} \setminus \{ 0\}$ and $\deg f = \sum_{\l\in\Lambda} d_\l$. By the Chinese Remainder Theorem,
\[
K[x]/f(x)^t \iso \bigoplus_{\l\in\Lambda} K[x]/(x-\l)^{td_\l}.
\]
Now
\[\begin{split}
K[x]/x^s \otimes K[x]/f(x)^t 
&\iso K[x]/x^s \otimes \bigoplus_{\l\in\Lambda} K[x]/(x-\l)^{td_\l} \\ 
&\iso \bigoplus_{\l\in\Lambda} td_\l K[x]/x^s = t(\deg f) K[x]/x^s,
\end{split}\]
by Theorem \ref{jordan}. We also obtain
\[
K[x]/x^s \otimes K[x]/x^t \iso (t-s+1)K[x]/x^s \oplus\bigoplus_{i=1}^{s-1}2K[x]/x^i
\]
if $s \le t$ from Theorem \ref{jordan}. By Lemma \ref{extension}, these fomulae also hold when we replace $K$ by $k$.
\end{proof}

From Proposition \ref{zero} follows that the elements $[k[x]/x^s]$ span an ideal $I$ in $R$. Moreover, Proposition \ref{zero} describes how the elements in $R$ act on $I$. Note that if a $k[x]$-module $V$ contains no direct summand isomorphic to $k[x]/x^s$, then $[V]w = (\dim V) w$ for all $w \in I$. 

From now on we assume that $k$ is perfect, i.e. all irreducible polynomials over $k$ have
distinct zeros. Let $\l \in K\setminus\{0\}$ and $l$ be a positive integer. The matrix $J_{1}(l)\otimes J_{\l}(1)$ is conjugate to $\l J_{1}(l)$, which in turn is conjugate to $J_\l(l)$ via a rescaling of the basis vectors. Hence $J_{\l}(l) \otimes J_{\mu}(m) \sim J_{1}(l) \otimes J_{1}(m) \otimes J_{\l}(1) \otimes J_{\mu}(1) \sim J_{1}(l) \otimes J_{1}(m) \otimes J_{\l\mu}(1) $ for all $\l,\mu \in k\setminus\{0\}$.  To find the Jordan decomposition of $J_{\l}(l) \otimes J_{\mu}(m)$ it is therefore enough to decompose $J_{1}(l) \otimes J_{1}(m)$. The following lemma extends this result to arbitrary perfect fields.

\begin{lem}\label{separate}
For any positive integer $s$ and irreducible polynomial $f(x)\in k[x]$ with $f(0) \not =
0$, the $k[x]$-modules $k[x]/f(x)^s$ and $k[x]/(x-1)^s \otimes k[x]/f(x)$ are isomorphic.
\end{lem}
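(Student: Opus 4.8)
The plan is to compare the two modules via the operators they represent. The module $k[x]/f(x)^s$ corresponds to the companion-matrix operator of $f(x)^s$; over the algebraic closure $K$, since $k$ is perfect, $f$ has $\deg f$ distinct nonzero roots $\l_1,\dots,\l_d$, and by the Chinese Remainder Theorem
\[
K\otimes k[x]/f(x)^s \iso \bigoplus_{j=1}^d K[x]/(x-\l_j)^s,
\]
i.e.\ the operator is conjugate to $\bigoplus_j J_{\l_j}(s)$. On the other hand $k[x]/(x-1)^s\otimes k[x]/f(x)$ corresponds to $J_1(s)\otimes C_f$, where $C_f$ is the companion matrix of $f$; over $K$ the factor $C_f$ is conjugate to the diagonal matrix $\mathrm{diag}(\l_1,\dots,\l_d)$ (distinct eigenvalues!), so
\[
K\otimes\bigl(k[x]/(x-1)^s\otimes k[x]/f(x)\bigr) \iso \bigoplus_{j=1}^d J_1(s)\otimes J_{\l_j}(1)
\iso \bigoplus_{j=1}^d J_{\l_j}(s),
\]
the last step being exactly the rescaling observation made in the paragraph preceding the lemma ($J_1(s)\otimes J_\l(1)\sim \l J_1(s)\sim J_\l(s)$). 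Thus $K\otimes k[x]/f(x)^s$ and $K\otimes\bigl(k[x]/(x-1)^s\otimes k[x]/f(x)\bigr)$ are isomorphic as $K[x]$-modules, and Lemma \ref{extension} (applied with $F=K$, $A=k[x]$) gives the desired isomorphism over $k$.

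The steps, in order: first record that perfectness forces $f$ to have distinct roots over $K$, so $C_f$ is diagonalisable there; second, apply the Chinese Remainder Theorem to identify $K\otimes k[x]/f(x)^s$ with $\bigoplus_j J_{\l_j}(s)$; third, use diagonalisability of $C_f$ together with the rescaling identity $J_1(s)\otimes J_\l(1)\sim J_\l(s)$ to identify the extension of the right-hand module with the same direct sum; fourth, invoke Lemma \ref{extension} to descend the isomorphism from $K$ to $k$.

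I expect the only genuine subtlety to be the bookkeeping that ties $k[x]/f(x)^s$ to the Jordan form $\bigoplus_j J_{\l_j}(s)$ after base change — one must check that raising to the $s$-th power and then applying CRT really does produce Jordan blocks of size $s$ at each root, which is where distinctness of roots (hence perfectness of $k$) is essential; if $f$ had a repeated root the block sizes would be larger and the statement would fail. Everything else is either the already-established rescaling remark or a direct citation of Lemma \ref{extension}, so there is no serious obstacle beyond this verification.
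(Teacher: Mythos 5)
Your proposal is correct and follows essentially the same route as the paper's own proof: extend scalars to $K$, use perfectness to split $f$ into distinct linear factors, apply the Chinese Remainder Theorem and the rescaling identity $J_1(s)\otimes J_\l(1)\sim J_\l(s)$ to match both sides as $\bigoplus_j J_{\l_j}(s)$, and descend via Lemma \ref{extension}. The only difference is cosmetic — you phrase the argument in terms of Jordan blocks and the companion matrix where the paper writes the same chain of isomorphisms in module notation.
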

\begin{proof}
Let $\l \in K \setminus\{0\}$. Since $J_{\l}(s)$ is conjugate to $J_{1}(s)\otimes J_{\l}(1)$ we have $K[x]/(x-\l)^s \iso K[x]/(x-1)^s \otimes K[x]/(x-\l)$.

Now let $f(x)\in k[x]$ be irreducible and monic with $f(0) \not = 0$. Since $k$ is perfect
$f(x)= \prod_{\l\in\Lambda} (x-\l)$ for some finite $\Lambda \subset K\setminus\{0\}$ and 
\[\begin{split}
K\otimes (k[x]/f(x)^s)
&\iso K[x]/f(x)^s \\
&\iso \bigoplus_{\l\in\Lambda} K[x]/(x-\l)^{s}\\
&\iso \bigoplus_{\l\in\Lambda} K[x]/(x-1)^s \otimes K[x]/(x-\l) \\
&\iso K[x]/(x-1)^s \otimes \bigoplus_{\l\in\Lambda} K[x]/(x-\l) \\
&\iso K[x]/(x-1)^s \otimes K[x]/f(x) \\
&\iso K\otimes(k[x]/(x-1)^s \otimes k[x]/f(x)).
\end{split}\] 
The result now follows from Lemma \ref{extension}.
\end{proof}

Let $R'$ be the $\mathbb{Z}$-span in $R$ of all $[k[x]/(x-1)^s]$, and $\bar R$ be the $\mathbb{Z}$-span in $R$ of all $[k[x]/f(x)]$, where $f(x)\in k[x]$ is irreducible and $f(0) \not = 0$.

\begin{pro} \label{subring}
\begin{enumerate}
\item \label{sub1}
  The sets $R'$ and $\bar R$ are subrings of $R$. 
\item \label{sub2}
  Let $f(x),g(x)\in k[x]$ be irreducible with $f(0)\neq0\neq g(0)$, and $\Lambda$ and $M$
  their respective sets of zeros in $K$.
  If $$k[x]/f(x)\otimes k[x]/g(x)\iso \bigoplus_{j\in J} k[x]/h_j(x)$$ then the zeros in
  $K$ of all $h_j(x)$, counting repetitions, are precisely the numbers $\l\mu$, with
  $(\l,\mu)\in \Lambda\times M$.
 \end{enumerate}
\end{pro}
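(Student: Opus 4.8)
The plan is to reduce everything to the algebraically closed case by base change to $K$, using two ingredients: the tensor product of one-dimensional modules multiplies eigenvalues, i.e. $K[x]/(x-\l)\otimes K[x]/(x-\mu)\iso K[x]/(x-\l\mu)$; and, since $k$ is perfect, every irreducible monic $f\in k[x]$ with $f(0)\neq0$ factors over $K$ as $f(x)=\prod_{\l\in\Lambda}(x-\l)$ with pairwise distinct zeros. Throughout I will use the natural $K[x]$-module isomorphism $K\otimes(V\otimes W)\iso(K\otimes V)\otimes(K\otimes W)$, together with $K\otimes\bigl(k[x]/h(x)^{e}\bigr)\iso K[x]/h(x)^{e}\iso\bigoplus_{h(\nu)=0}K[x]/(x-\nu)^{e}$ (the last step being the Chinese Remainder Theorem applied to the separable polynomial $h$), and the Krull--Schmidt Theorem to match decompositions.

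I would prove part \ref{sub2} first. Base-changing the tensor product on the left yields
\[
\begin{split}
K\otimes\bigl(k[x]/f(x)\otimes k[x]/g(x)\bigr)
&\iso\Bigl(\bigoplus_{\l\in\Lambda}K[x]/(x-\l)\Bigr)\otimes\Bigl(\bigoplus_{\mu\in M}K[x]/(x-\mu)\Bigr)\\
&\iso\bigoplus_{(\l,\mu)\in\Lambda\times M}K[x]/(x-\l\mu),
\end{split}
\]
while the right-hand side base-changes to a direct sum of copies of $K[x]/(x-\nu)$, one for each zero $\nu$ of each $h_j$ (each with multiplicity $1$, by perfectness). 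Since both are decompositions of the same $K[x]$-module into indecomposables, Krull--Schmidt shows they agree up to a permutation, so the zeros of the $h_j$, counted with repetitions, are exactly the products $\l\mu$, $(\l,\mu)\in\Lambda\times M$; and these are all nonzero because $\l,\mu\neq0$.

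For part \ref{sub1} I would treat $\bar R$ by sharpening the same comparison: writing the indecomposable decomposition over $k$ as $k[x]/f(x)\otimes k[x]/g(x)\iso\bigoplus_{j}k[x]/h_j(x)^{e_j}$ and base-changing, matching against the squarefree sum $\bigoplus_{(\l,\mu)}K[x]/(x-\l\mu)$ via Krull--Schmidt forces every $e_j=1$ and every zero of every $h_j$ to be some $\l\mu\neq0$, whence $h_j(0)\neq0$. Thus $[k[x]/f(x)][k[x]/g(x)]=\sum_j[k[x]/h_j(x)]\in\bar R$, and as $\bar R$ contains $1=[k[x]/(x-1)]$ it is a subring. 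For $R'$, base change gives $K\otimes\bigl(k[x]/(x-1)^s\otimes k[x]/(x-1)^t\bigr)\iso K[x]/(x-1)^s\otimes K[x]/(x-1)^t$, whose characteristic polynomial is a power of $x-1$, so it is isomorphic to $\bigoplus_i K[x]/(x-1)^{u_i}$ for suitable positive integers $u_i$, hence to $K\otimes\bigl(\bigoplus_i k[x]/(x-1)^{u_i}\bigr)$; Lemma \ref{extension} then gives $k[x]/(x-1)^s\otimes k[x]/(x-1)^t\iso\bigoplus_i k[x]/(x-1)^{u_i}$, so $R'$ is closed under multiplication and, containing $1=[k[x]/(x-1)]$, is a subring.

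The argument is essentially bookkeeping, and I expect no genuinely hard step. The point that needs care is the repeated transfer of isomorphism data across $k\subset K$ --- always reducing to Krull--Schmidt over $K$ and, where a statement over $k$ is wanted, invoking Lemma \ref{extension} --- and the use of perfectness to guarantee that $f$, $g$ and all the $h_j$ are separable; without separability the base-changed modules would not be direct sums of $K[x]/(x-\nu)^{e}$ with the $\nu$ distinct in each factor, and the eigenvalue count could not be read off as cleanly.
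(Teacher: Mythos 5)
Your proposal is correct and follows essentially the same route as the paper: base change to $K$, use $K[x]/(x-\l)\otimes K[x]/(x-\mu)\iso K[x]/(x-\l\mu)$ together with separability and the Chinese Remainder Theorem, and match the two decompositions by Krull--Schmidt to force $d_j=1$, $h_j(0)\neq0$, and the eigenvalue count. The only (harmless) variation is in the $R'$ step, where the paper argues directly over $k$ that $J_1(s)\otimes J_1(t)$ is identity plus nilpotent, whereas you pass through $K$ and descend via Lemma~\ref{extension}.
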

\begin{proof}
First note that $1_R = [k[x]/(x-1)] \in R'\cap \bar R$. We proceed to show that $R'$ and $\bar R$ are closed under multiplication. 

The matrix $J_1(s)\otimes J_1(t)$ is conjugate to $\mathbb{I}_{st}+N$ for some nilpotent matrix $N$. Since every nilpotent matrix is conjugate to a direct sum of Jordan blocks with eigenvalue zero, we get
\[
k[x]/(x-1)^s \otimes k[x]/(x-1)^t \iso \bigoplus_{i = 1}^n k[x]/(x-1)^{m_i}
\]
for some integers $m_i$, and thus $R'$ is a subring of $R$.

Let $f(x),g(x) \in k[x]$ be irreducible and monic with $f(0) \not = 0 \not = g(0)$. Decompose $f(x) = \prod_{\l \in \Lambda}(x-\l)$ and $g(x) = \prod_{\mu \in M}(x-\mu)$ in $K[x]$. Further, decompose 
\[
k[x]/f(x) \otimes k[x]/g(x) \iso \bigoplus_{j\in J} k[x]/h_j(x)^{d_j}
\]
for some irreducible polynomials $h_j(x) \in k[x]$. Then
\begin{equation*}
\begin{split}
\bigoplus_{j\in J} K[x]/h_j(x)^{d_j}
&\iso K[x]/f(x) \otimes K[x]/g(x)  \\
&\iso \bigoplus_{\latop{\l \in \Lambda}{\mu\in M}} K[x]/(x-\l)\otimes K[x]/(x-\mu) \\
&\iso \bigoplus_{\latop{\l \in \Lambda}{\mu\in M}} K[x]/(x-\l\mu).
\end{split}
\end{equation*}
By the Krull-Schmidt Theorem, the only possibility is that $d_j = 1$ and $h_j(0) \not = 0$ for all $j \in J$. Hence $\bar R$ is a subring of $R$.
Moreover, the zeros of the polynomials $h_j(x)$ are the products $\l\mu$ of zeros of
$f(x)$ and $g(x)$, as asserted in the second part of the proposition.
\end{proof}

Define a ring structure on $R' \otimes_\mathbb{Z} \bar R \oplus I$ by $(a\otimes b)w = \dim (a) \dim (b) w$ for all $a \in R'$, $b \in R'$ and $w \in I$.

\begin{thm}\label{repring}
The $\mathbb{Z}$-linear map
\[
\phi : R' \otimes_\mathbb{Z} \bar R \oplus I \rightarrow R,
\]
defined by $\phi(a\otimes b + w) = ab + w$ is a ring isomorphism.
\end{thm}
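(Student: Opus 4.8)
The plan is to exhibit an explicit inverse map and check it is a ring homomorphism, using the structure results accumulated so far. First I would establish that $\phi$ is an isomorphism of abelian groups. The free $\mathbb{Z}$-basis of $R$ consists of the classes $[k[x]/f(x)^s]$ with $f$ irreducible monic and $s\ge1$. These split into two families: those with $f(0)=0$, i.e. $f(x)=x$, which form the basis of $I$; and those with $f(0)\neq0$. By Lemma~\ref{separate}, $[k[x]/f(x)^s] = [k[x]/(x-1)^s]\cdot[k[x]/f(x)] = \phi([k[x]/(x-1)^s]\otimes[k[x]/f(x)])$ for the latter family. Since the elements $[k[x]/(x-1)^s]\otimes[k[x]/f(x)]$ (for $s\ge1$, $f$ irreducible with $f(0)\neq0$) form a $\mathbb{Z}$-basis of $R'\otimes_\mathbb{Z}\bar R$, and the $[k[x]/x^s]$ form a $\mathbb{Z}$-basis of $I$, the map $\phi$ sends a basis bijectively to a basis, hence is a group isomorphism.

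Next I would verify multiplicativity. The domain decomposes as $R'\otimes_\mathbb{Z}\bar R\oplus I$ with the product defined so that $(R'\otimes_\mathbb{Z}\bar R)\cdot I$ and $I\cdot I$ land in $I$ via the dimension formula, while $(R'\otimes_\mathbb{Z}\bar R)$ is a subring. Correspondingly, it suffices to check that $\phi$ respects products on each type of pair of basis elements. For two elements $a\otimes b$, $a'\otimes b'$ of $R'\otimes_\mathbb{Z}\bar R$: by Proposition~\ref{subring}\eqref{sub1}, $aa'\in R'$ and $bb'\in\bar R$, and one checks $(a\otimes b)(a'\otimes b') = aa'\otimes bb'$ maps under $\phi$ to $aa'bb'$; since $R'$ and $\bar R$ commute inside $R$ this equals $\phi(a\otimes b)\phi(a'\otimes b')$. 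Here the point to verify carefully is that the tensor product ring structure on $R'\otimes_\mathbb{Z}\bar R$ really is carried to the subring of $R$ generated by $R'$ and $\bar R$ — essentially that $R'\otimes_\mathbb{Z}\bar R\to R$, $a\otimes b\mapsto ab$, is well-defined and multiplicative, which follows because $R'$ and $\bar R$ are commuting subrings whose product-expansions stay within the respective families (the first by the nilpotent-matrix argument, the second by Proposition~\ref{subring}\eqref{sub2}).

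For a pair involving $I$: take $w=[k[x]/x^s]\in I$ and any basis element $a\otimes b$ with $a=[k[x]/(x-1)^r]$, $b=[k[x]/f(x)]$. Then $\phi(a\otimes b+0)=ab$, and $\phi(0+w)=w$, so I must show $(ab)w = \dim(ab)\,w$ in $R$, matching the defining relation $(a\otimes b)w=\dim(a)\dim(b)w$. This is exactly the observation recorded after Proposition~\ref{zero}: the module $k[x]/(x-1)^r\otimes k[x]/f(x)$ has no summand isomorphic to any $k[x]/x^i$ (its class lies in $\bar R\cdot R'$, whose members are sums of $k[x]/(x-1)^{m}$-type and $k[x]/h(x)$-type indecomposables with $h(0)\neq0$, none of them $x$-power modules), so multiplication by it on $I$ is scalar multiplication by its dimension, and $\dim(ab)=\dim(a)\dim(b)$. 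Finally, for $w,w'\in I$ the product $ww'$ computed in $R$ agrees with the product in the domain, which is likewise $\dim(w)\dim(w')$-scaled — but here I should be careful, since the domain's product on $I\times I$ is defined to be zero-ish via the formula only when one factor comes from $R'\otimes_\mathbb{Z}\bar R$; in fact the product $w w'$ for $w,w'\in I$ must be read off directly from Proposition~\ref{zero}(2), and I would note that $I^2\subseteq I$ with the product inherited from $R$, so $\phi|_I$ is automatically a ring map on $I$, and the cross terms are handled as above.

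The main obstacle I anticipate is bookkeeping rather than depth: making the ad hoc ring structure on $R'\otimes_\mathbb{Z}\bar R\oplus I$ completely precise (in particular pinning down $I\cdot I$ and confirming the idempotent $1_R=[k[x]/(x-1)]$ sits in $R'\otimes_\mathbb{Z}\bar R$ correctly as $1_{R'}\otimes1_{\bar R}$) and then checking multiplicativity on the finitely many shapes of basis-element pairs without circular appeals. All the genuinely nontrivial input — Lemma~\ref{separate}, Proposition~\ref{zero}, and Proposition~\ref{subring} — is already available, so the argument should be a clean assembly.
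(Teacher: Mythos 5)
Your proposal is correct and follows essentially the same route as the paper: the bijectivity is obtained exactly as in the paper via Lemma~\ref{separate} (matching $\mathbb{Z}$-bases), and multiplicativity reduces to the cross terms $(a\otimes b)\cdot w$, handled by Proposition~\ref{zero} and the remark following it, with the pure $R'\otimes_{\mathbb{Z}}\bar R$ and $I\times I$ cases being automatic from Proposition~\ref{subring} and the inherited ideal structure. Your extra bookkeeping (checking the tensor-product ring structure maps onto $R'\bar R$, and that $I\cdot I$ is read off from Proposition~\ref{zero}) is just an expanded version of what the paper leaves implicit.
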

\begin{proof}
By Lemma \ref{separate}, $\phi(R' \otimes_\mathbb{Z} \bar R)$ is spanned by all $[k[x]/f(x)^s]$, where $f(x)\in k[x]$ is irreducible, $f(0) \not = 0$ and $s \in \mathbb{N} \setminus \{0\}$. Hence $R  = \phi(R' \otimes_\mathbb{Z} \bar R) \oplus I$. Moreover $\phi$ induces a bijection between $\mathbb{Z}$-bases, and is therefore a bijection. 

To show that $\phi$ is a ring morphism it is enough to check that $\phi((a\otimes b)w) = \phi(a \otimes b)\phi(w) $ for all $a \in R'$, $b \in \bar R$ and $w \in I$. By Proposition \ref{zero},
\[
\phi(a \otimes b)w = abw = \dim(a)\dim(b)w = \phi(\dim(a)\dim(b)w) = \phi((a\otimes b)w).
\]
Hence $\phi$ is an isomorphism of rings.
\end{proof}

Since the structure of the ideal $I$ is completely described by Proposition \ref{zero} and in fact does not depend on the ground field $k$, Theorem \ref{repring} reduces the problem of describing $R$ to describing each of the rings $R'$ and $\bar R$. 

\section{Explicit description of the representation ring}
In this section, we investigate the structure of the rings $R'$ and $\bar{R}$. It turns out that the ring $R'$ only depends on the characteristic of $k$ and when $k$ is algebraically or real closed the ring $\bar{R}$ has a fairly simple description. We also present the explicit Clebsch-Gordan formulae for the decomposition of the tensor product of arbitrary finite-dimensional $k[x]$-modules, in case $k$ is real closed.

\subsection{The ring $R'$}
Denote $k[x]/(x-1)^s$ by $V_s$ for all $s \ge 0$, and set $v_s = [V_s]$. 
In particular, $v_0 = 0$ and $v_1 = 1$ in $R$. Recall that $R'$ is a subring of $R$ and freely generated, as an abelian group, by the set $\mathcal{V} = \{v_s \;|\; s \ge 1\}$.

We start with a description of $R'$ in characteristic zero, derived form Theorem \ref{jordan}. It is included here to contrast the case of positive characteristic.

\begin{thm}\label{charzero}
Assume that the characteristic of $k$ is zero. The ring morphism
\[
\phi : \mathbb{Z}[T] \rightarrow R',
\]
defined by $T \mapsto v_2$ is an isomorphism.
\end{thm}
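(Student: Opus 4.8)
The plan is to use the Clebsch–Gordan formula of Theorem~\ref{jordan}(1), which over a characteristic-zero field gives
\[
v_l v_m = \sum_{i=0}^{\min(l,m)-1} v_{l+m-2i-1},
\]
to show simultaneously that $\phi$ is surjective and injective. For surjectivity, I would argue by induction on $s$ that $v_s$ lies in the subring $\mathbb{Z}[v_2]$ generated by $v_2$. The base cases $v_1 = 1$ and $v_2 = \phi(T)$ are clear. For the inductive step, apply the formula to the product $v_2 v_s$: for $s \ge 2$ it reads $v_2 v_s = v_{s+1} + v_{s-1}$, so $v_{s+1} = v_2 v_s - v_{s-1}$, which by the induction hypothesis is a polynomial in $v_2$. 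Hence $\phi$ is surjective and, moreover, $v_{s+1} = v_2 v_s - v_{s-1}$ exhibits $v_{s+1}$ as a monic polynomial of degree $s$ in $v_2$ (these are, up to normalisation, Chebyshev-like polynomials).

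For injectivity, the key observation from the recursion $v_{s+1} = v_2 v_s - v_{s-1}$ is that $v_s = p_s(v_2)$ where $p_s \in \mathbb{Z}[T]$ is monic of degree $s-1$; in particular $p_1, p_2, p_3, \ldots$ have pairwise distinct degrees $0,1,2,\ldots$, so they form a $\mathbb{Z}$-basis of $\mathbb{Z}[T]$. Thus $\phi$ maps the $\mathbb{Z}$-basis $\{T^n \mid n \ge 0\}$ of $\mathbb{Z}[T]$ to a set that, after the unitriangular change of basis $T^{s-1} \mapsto p_s(T)$, corresponds to the $\mathbb{Z}$-basis $\mathcal{V} = \{v_s \mid s \ge 1\}$ of $R'$. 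A $\mathbb{Z}$-linear map sending a basis bijectively to a basis is an isomorphism of abelian groups; combined with the fact that $\phi$ is a ring morphism by construction, this gives that $\phi$ is a ring isomorphism.

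The only genuine subtlety is to make sure the recursion is applied in the right range: the identity $v_2 v_s = v_{s+1} + v_{s-1}$ requires $\min(2,s) = 2$, i.e.\ $s \ge 2$, and one should separately record $v_2 v_1 = v_2$ (consistent with $v_0 = 0$). No other case of Theorem~\ref{jordan} is needed, and there is no real obstacle — the argument is a clean induction plus a unitriangularity observation. The point of stating the theorem, as the text notes, is contrast: in positive characteristic the recursion degenerates (Theorem~\ref{jordan}(1) fails), and $R'$ is no longer a polynomial ring on one generator.
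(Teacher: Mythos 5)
Your proof is correct and follows essentially the same route as the paper: both arguments rest on the recursion $v_2v_s = v_{s+1}+v_{s-1}$ from Theorem~\ref{jordan} and a unitriangular change-of-basis observation, the paper expanding the powers $v_2^s$ in the basis $\{v_t\}$ (showing $v_2^s \in v_{s+1} + \spn\{v_t \mid t \le s\}$) while you express each $v_s$ as a monic degree-$(s-1)$ polynomial in $v_2$ — two readings of the same triangular matrix. Your handling of the edge case $v_2v_1 = v_2$ (via $v_0=0$) matches the paper's convention.
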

\begin{proof}
Let $Z_s$ be the span of the elements $v_t$, where $1 \le t \le s$. By Theorem \ref{jordan},
\[
v_2v_s = v_{s-1} + v_{s+1}
\]
for all positive integers $s$. In particular $v_2 Z_s \subset Z_{s+1}$.

We show by induction that $v_2^s \in v_{s+1} + Z_s$ for all $s \ge 0$, from which the theorem follows. First note that $v_2^0 = 1 = v_1 \in v_1 + Z_0$. Now assume that $v_2^s \in v_{s+1} + Z_s$ for some natural number $s$. Then $v_2^{s+1} \in  v_2(v_{s+1} + Z_s) \subset v_{s+2} +v_s + Z_{s+1} = v_{s+2} + Z_{s+1}$.
\end{proof}

Note that $f_{s}(T) = \phi ^{-1}(v_s)$ satisfies the recurrence relation
\[
f_{s+1} = f_2f_s -  f_{s-1},
\]
and $f_1(T) = 1$, $f_2(T) = T$. Set $U_s = f_{s+1}(2T)$. Then $U_0(T) = 1$, $U_1(T) = 2T$, and
\[
U_{s+1}(T) = f_{s+2}(2T) = f_2(2T)f_{s+1}(2T) - f_{s}(2T) = 2TU_{s}(T) - U_{s-1}(T).
\]
Hence the polynomials $f_{s+1}(2T) = U_s(T)$ are the Chebyshev polynomials of the second kind. 

By Theorem \ref{charzero}, the ring $R'$ is generated by the single element $v_2$ in case the ground field $k$ has characteristic zero. As we shall see, this is very far from the behaviour of $R'$ in positive characteristic. 

Assume that $k$ has characteristic $p>0$. We proceed to describe $R'$ by relating it to
the representation rings of cyclic $p$-groups, which are described by Green in \cite{green61}.  Let
$\alpha \in \mathbb{N}$ and $\sigma_\alpha$ be a chosen generator of $C_\alpha$, the cyclic group of order $q=p^\alpha$. Denote the representation ring of $k C_\alpha$ by $A_\alpha$. The indecomposable $kC_\alpha$-modules are classified by the modules
\[
kC_\alpha /(\sigma_\alpha-1)^s
\]
where $1 \le s \le q$. Let $\beta \ge \alpha$. The homomorphism $C_\beta \rightarrow C_\alpha$ defined by $\sigma_\beta \mapsto \sigma_\alpha$ allows us to view any $kC_\alpha$-module as a $kC_\beta$-module. Hence we may regard $A_\alpha$ as  subring of $A_\beta$, by identifying $[kC_\alpha /(\sigma_\alpha-1)^s]$ and $[kC_\beta /(\sigma_\beta-1)^s]$ for all $1 \le s \le q$. 

Accordingly, for every $s \ge 1$ we denote each of the elements $[kC_\alpha /(\sigma_\alpha-1)^s]$, where $p^\alpha \ge s$, by $u_s$.  The elements $u_s$ span the ring
\[
A = \bigcup_{\alpha \in \mathbb{N}} A_\alpha
\]
freely over $\mathbb{Z}$.

\begin{pro}\label{charp}
Assume that the characteristic of $k$ is $p>0$. The $\mathbb{Z}$-linear map
\[
\phi : A \rightarrow R',
\]
defined by $u_s \mapsto v_s$ is a ring isomorphism.
\end{pro}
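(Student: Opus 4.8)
The plan is to prove this in two stages: first that $\phi$ is a $\mathbb{Z}$-module isomorphism (essentially by construction), and then that it is multiplicative by recognising that the structure constants on both sides are governed by one and the same object, namely the Jordan decomposition of the unipotent operator $J_1(s)\otimes J_1(t)$. Since $\phi$ sends the free $\mathbb{Z}$-basis $\{u_s\mid s\ge 1\}$ of $A$ bijectively onto the free $\mathbb{Z}$-basis $\{v_s\mid s\ge 1\}$ of $R'$, it is automatically a group isomorphism, and everything reduces to checking that $\phi(u_su_t)=v_sv_t$ for all positive integers $s,t$.

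The key step is to make the operator-theoretic translation explicit and parallel on the two sides. On the $k[x]$-side, $V_s=k[x]/(x-1)^s$ corresponds to the operator $J_1(s)$, and because the comultiplication on $k[x]$ is $x\mapsto x\otimes x$, the module $V_s\otimes V_t$ corresponds to $J_1(s)\otimes J_1(t)$. As already observed in the proof of Proposition \ref{subring}, this operator is unipotent, hence conjugate to $\bigoplus_{i=1}^n J_1(m_i)$ for suitable positive integers $m_i$, so that $v_sv_t=\sum_{i=1}^n v_{m_i}$ in $R'$. On the group side, fix $\alpha$ with $p^\alpha\ge\max(s,t)$, so that $u_s$ and $u_t$ are both represented in $A_\alpha$. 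Since $\cha k=p$ we have $kC_\alpha\iso k[y]/(y^{p^\alpha})$ via $y=\sigma_\alpha-1$, whence $kC_\alpha/(\sigma_\alpha-1)^s\iso k[y]/(y^s)$ is exactly the $kC_\alpha$-module on which $\sigma_\alpha$ acts as $J_1(s)$; this module represents $u_s$. The comultiplication on $kC_\alpha$ being $\sigma_\alpha\mapsto\sigma_\alpha\otimes\sigma_\alpha$, the product $u_su_t$ is represented by the $kC_\alpha$-module on which $\sigma_\alpha$ acts as $J_1(s)\otimes J_1(t)$ — literally the same linear operator as above.

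It then remains to decompose this module over $kC_\alpha$. Writing $J_1(s)\otimes J_1(t)\sim\bigoplus_{i=1}^n J_1(m_i)$ as before, each summand $J_1(m_i)$ is the operator of the indecomposable $kC_\alpha$-module $kC_\alpha/(\sigma_\alpha-1)^{m_i}$: since $\sigma_\alpha^{p^\alpha}=1$ acts as the identity on every $kC_\alpha$-module, $(J_1(s)\otimes J_1(t))^{p^\alpha}=\mathbb{I}$, which forces $m_i\le p^\alpha$ for all $i$, so these are honest indecomposable $kC_\alpha$-modules. Hence $u_su_t=\sum_{i=1}^n u_{m_i}$ in $A_\alpha\subseteq A$, and therefore $\phi(u_su_t)=\sum_{i=1}^n v_{m_i}=v_sv_t=\phi(u_s)\phi(u_t)$, completing the proof.

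I expect the genuine content to lie not in any single difficult estimate but in the bookkeeping around three points: that the tensor products of $k[x]$-modules and of $kC_\alpha$-modules yield the identical operator $J_1(s)\otimes J_1(t)$ (the precise match between $x\mapsto x\otimes x$ and $\sigma_\alpha\mapsto\sigma_\alpha\otimes\sigma_\alpha$); that the Jordan blocks occurring in the decomposition have size at most $p^\alpha$, so that the product $u_su_t$ genuinely lands in $A_\alpha$; and that this product is independent of the chosen $\alpha$, which is exactly the compatibility of the embeddings $A_\alpha\hookrightarrow A_\beta$ built into the definition of $A$. Once these are clear, the multiplicativity of $\phi$ follows as above.
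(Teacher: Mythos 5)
Your proposal is correct and follows essentially the same route as the paper: both sides' structure constants are identified with the Jordan decomposition of the unipotent operator $J_1(s)\otimes J_1(t)$, using that $\sigma_\alpha$ acts as $J_1(s)$ on $kC_\alpha/(\sigma_\alpha-1)^s$. Your additional checks (that the block sizes $m_i$ satisfy $m_i\le p^\alpha$ and that the product is independent of the chosen $\alpha$) are points the paper leaves implicit, and they are verified correctly.
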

\begin{proof}
As abelian groups $A$ and $R'$ are freely generated by the elements $u_s$ and $v_s$ respectively. It therefore suffices to check that $\phi$ respects multiplication.

Let $\alpha \in \mathbb{N}$, $q=p^\alpha$ and $1 \le s \le q$. The $kC_\alpha$-module $kC_\alpha /(\sigma_\alpha-1)^s$ has the basis $\{(\sigma_\alpha-1)^i \;|\; 0 \le i \le s-1\}$. Moreover,
\[
\sigma_\alpha (\sigma_\alpha-1)^i = (\sigma_\alpha-1)^{i+1} + (\sigma_\alpha-1)^i.
\]
Hence $\sigma_\alpha$ has the matrix $J_1(s)$ in the aforementioned basis. Thus, if $J_1(s) \otimes J_1(t)$ has the Jordan decomposition
\[
\bigoplus_{i=1}^n J_1(m_i),
\]
then $u_su_t = \sum_{i=1}^n u_{m_i}$. On the other hand we also have $v_sv_t = \sum_{i=1}^n v_{m_i}$. Hence $\phi$ is an isomorphism of rings.
\end{proof}

From Proposition \ref{charp} we deduce that $R'$ is not generated by any finite subset. For all $\alpha \in \mathbb{N}$ set $R'_\alpha = \phi(A_\alpha)$ and $w_{\alpha}  = v_{p^\alpha+1}-v_{p^\alpha-1}$. We immediately obtain the following translation of \cite[Theorem~3]{green61}:

\begin{thm}\label{charprel}
Let $k$ be a field of characteristic of $p>0$ and $\alpha \in \mathbb{N}$. Set $q = p^\alpha$. Then
\[
w_{\alpha}v_r = \left\{
\begin{array}{lcr}
v_{r+q} - v_{q-r} & \mbox{if} & 1 \le r \le q \\
v_{r+q} + v_{r-q} & \mbox{if} &q < r \le (p-1)q \\
v_{r-q} +2v_{pq}- v_{(2p-1)q-r} & \mbox{if} & (p-1)q < r \le pq \\
\end{array} \right.
\]
Moreover this equation defines the multiplicative structure of $R'$.
\end{thm}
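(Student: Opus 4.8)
The plan is to reduce everything to a single cyclic $p$-group by means of Proposition~\ref{charp} and then to invoke Green. Under the inverse of the isomorphism of Proposition~\ref{charp} one has $v_r\mapsto u_r$, and, since $q+1=p^\alpha+1\le p^{\alpha+1}=pq$, the element $w_\alpha=v_{p^\alpha+1}-v_{p^\alpha-1}$ is sent to $u_{q+1}-u_{q-1}\in A_{\alpha+1}$, where $A_{\alpha+1}$ is the representation ring of $kC_{\alpha+1}$, a cyclic group of order $pq$ (with the convention $u_0=0$ when $\alpha=0$). Every index occurring on the right-hand side of the asserted identity lies between $1$ and $pq$, so the whole equation takes place inside $A_{\alpha+1}$: it is precisely the rule for multiplying the virtual module $u_{q+1}-u_{q-1}$ by an indecomposable $kC_{\alpha+1}$-module. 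I would therefore first verify that the three displayed cases $1\le r\le q$, $q<r\le(p-1)q$, $(p-1)q<r\le pq$ coincide, after matching conventions, with the three cases of \cite[Theorem~3]{green61} (there indecomposables are indexed by their dimension, and our $q=p^\alpha$ plays the role of the index of $C_\alpha$ in $C_{\alpha+1}$). The one point requiring separate attention is $p=2$, where the middle range is empty so that only the first and third formulas occur.

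For the ``moreover'' part --- that these identities \emph{determine} the ring structure of $R'$ --- the plan is to argue by induction on $n$ that they determine the multiplication of the subring $R'_n$ (a subring by Proposition~\ref{charp}), the base case $R'_0=\mathbb{Z}$ being trivial. For the step, one first rewrites cases (1) and (2) as $v_{r+q}=w_nv_r+v_{q-r}$ for $1\le r\le q$ and as $v_{r+q}=w_nv_r-v_{r-q}$ for $q<r\le(p-1)q$, with $q=p^n$; an easy induction on $s$ then shows that every $v_s$ with $1\le s\le p^{n+1}$ can be written, in a way prescribed by these identities, as a polynomial in $w_n$ with coefficients in $R'_n$, so that in particular $R'_{n+1}=R'_n[w_n]$. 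Secondly, one shows by induction on $m$ that each power $w_n^{\,m}v_r$, for $1\le r\le p^{n+1}$, is a $\mathbb{Z}$-combination of $v_1,\dots,v_{p^{n+1}}$ computable from the identities; this goes through because the formula $w_nv_r=\dots$ at the top index $r=pq$ returns $2v_{pq}$ rather than anything of larger index, so no index outside the prescribed range ever arises. Combining these with the inductive knowledge of multiplication on $R'_n$: given $s,t\le p^{n+1}$, write $v_s=\sum_i w_n^{\,i}a_i$ and $v_t=\sum_j w_n^{\,j}b_j$ with $a_i,b_j\in R'_n$, so that $v_sv_t=\sum_{i,j}w_n^{\,i+j}(a_ib_j)$; each $a_ib_j$ is then a determined element of $R'_n$, and multiplying it by $w_n^{\,i+j}$ is handled by the second fact. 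Hence the multiplication of $R'_{n+1}$, and ultimately of $R'=\bigcup_nR'_n$, is determined.

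The step I expect to be the main obstacle is the bookkeeping just indicated: since the theorem prescribes $w_\beta v_r$ only for $r\le p^{\beta+1}$, one must organise all computations so that this formula is never invoked outside its stated range --- which is exactly why the two inductions above are phrased in terms of expressions $\sum_i w_n^{\,i}a_i$ with $a_i\in R'_n$ rather than in the $v$-basis directly. Checking that these inductions go through, with due care at the endpoints $r=q$, $(p-1)q$, $pq$ and in the degenerate case $p=2$, is where the actual work lies. Alternatively, since \cite[Theorem~3]{green61} already asserts that its formula \emph{presents} the representation ring of $kC_{\alpha+1}$, one may simply transport that statement along the isomorphism of Proposition~\ref{charp} and forgo the explicit reduction.
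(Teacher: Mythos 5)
Your proposal is correct and follows the same route as the paper, which simply states the theorem as the translation of Green's Theorem~3 along the isomorphism of Proposition~\ref{charp} (your own closing ``alternatively'' remark is exactly the paper's argument). The extra bookkeeping you supply for the ``moreover'' clause is more detail than the paper gives, but not a different approach.
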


It follows that $R'$ is generated by $\mathcal{W} = \{w_\alpha \;|\; \alpha \in \mathbb{N}\}$. In fact, $R'_{\alpha+1} = R'_{\alpha}[w_\alpha]$. Consider the monomorphism $C_\alpha \rightarrow C_{\alpha +1}$, $\sigma_\alpha \mapsto \sigma_{\alpha+1}^p$. From it we obtain the restriction functor
\[
\cat{Res} : kC_{\alpha+1} \modu \rightarrow kC_{\alpha} \modu
\]
and its left adjoint, the induction functor
\[
\cat{Ind} : kC_{\alpha} \modu \rightarrow kC_{\alpha +1} \modu,
\]
where $kC_{\alpha} \modu $ denotes the category of all finite-dimensional $kC_{\alpha}$-modules. Let $1 \le s \le p^{\alpha+1}$ and write $s = tp +r$ for some $0 \le r < p$. It is straightforward to show that 
\[
\cat{Res}(kC_{\alpha+1} /(\sigma_{\alpha+1}-1)^s) \iso r kC_{\alpha} /(\sigma_{\alpha}-1)^{t+1} \oplus (p-r) kC_{\alpha} /(\sigma_{\alpha}-1)^{t}.
\] 
Also, for $1 \le s \le p^{\alpha}$ it holds that $\cat{Ind} (kC_{\alpha} /(\sigma_{\alpha}-1)^s) \iso kC_{\alpha+1} /(\sigma_{\alpha+1}-1)^{ps}$. Furthermore, recall that for all $kC_{\alpha+1}$-modules $V$ and $kC_{\alpha}$-modules $W$ the following formula holds:
\[
V \otimes \cat{Ind} W \iso \cat{Ind}(\cat{Res} V \otimes W).
\]

By Proposition \ref{charp}, The functors $\cat{Ind}$ and $\cat{Res}$ induce $\mathbb{Z}$-linear maps $\iota : R'  \rightarrow R'$ and $\rho : R'  \rightarrow R'$ respectively. From the observations above we obtain the following lemma:

\begin{lem}\label{indred}
Let $s \in \mathbb{N}$ and write $s = tp +r$ for some $0 \le r < p$. Further, let $v,w \in R'$. Then the following formulae hold
\begin{enumerate}
\item $\rho (v_s) = rv_{t+1} + (p-r)v_t$
\item $\iota (v_s) = v_{ps}$
\item $\iota(v)w = \iota(v \rho (w))$
\end{enumerate}
\end{lem}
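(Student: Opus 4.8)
The plan is to obtain all three identities by transporting, via the ring isomorphism $\phi : A \iso R'$ of Proposition \ref{charp}, the three module-theoretic facts recorded immediately above the lemma: the restriction formula for $\cat{Res}(kC_{\alpha+1}/(\sigma_{\alpha+1}-1)^s)$, the induction formula $\cat{Ind}(kC_\alpha/(\sigma_\alpha-1)^s)\iso kC_{\alpha+1}/(\sigma_{\alpha+1}-1)^{ps}$, and the projection formula $V\otimes\cat{Ind}W\iso\cat{Ind}(\cat{Res}V\otimes W)$. Throughout I use that under $\phi$ the class of $kC_\alpha/(\sigma_\alpha-1)^s$ corresponds to $v_s$ for every $\alpha$ with $p^\alpha\ge s$, and that $\iota$, $\rho$ are by definition the $\mathbb{Z}$-linear maps on $R'$ induced by $\cat{Ind}$, $\cat{Res}$; their well-definedness independently of the chosen $\alpha$ is already granted in the text preceding the lemma.

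For (1), fix $s$, choose $\alpha\in\mathbb{N}$ with $p^{\alpha+1}\ge s$, and write $s=tp+r$ with $0\le r<p$. The module $kC_{\alpha+1}/(\sigma_{\alpha+1}-1)^s$ has class $v_s$, and its restriction is $r\,kC_\alpha/(\sigma_\alpha-1)^{t+1}\oplus(p-r)\,kC_\alpha/(\sigma_\alpha-1)^{t}$. Here $t\le p^\alpha$ always, and $t+1\le p^\alpha$ whenever $r\ge1$ (the only case in which the first summand is nonzero), so the corresponding classes are genuine elements of $R'$, with the convention $v_0=0$ covering $t=0$; applying $\phi$ yields $\rho(v_s)=rv_{t+1}+(p-r)v_t$. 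Identity (2) is the same kind of translation: for $v_s$ with $s\ge1$ pick $\alpha$ with $p^\alpha\ge s$ (hence $ps\le p^{\alpha+1}$), and $\cat{Ind}(kC_\alpha/(\sigma_\alpha-1)^s)\iso kC_{\alpha+1}/(\sigma_{\alpha+1}-1)^{ps}$ gives $\iota(v_s)=v_{ps}$; the case $v_0$ is trivial.

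For (3), observe that both sides of $\iota(v)w=\iota(v\rho(w))$ are $\mathbb{Z}$-bilinear in $(v,w)$, since $\iota$ and $\rho$ are $\mathbb{Z}$-linear and multiplication in $R'$ is bilinear, so it suffices to treat basis elements $v=v_s$, $w=v_{s'}$. Choose $\alpha$ with $p^\alpha\ge s$ and $p^{\alpha+1}\ge s'$, and put $W=kC_\alpha/(\sigma_\alpha-1)^s$, $V=kC_{\alpha+1}/(\sigma_{\alpha+1}-1)^{s'}$, so that $v=[W]$, $w=[V]$, $\iota(v)=[\cat{Ind}W]$ and $\rho(w)=[\cat{Res}V]$. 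Then, using that $R'$ is commutative and the projection formula,
\[
\iota(v)w=[\cat{Ind}W]\,[V]=[\cat{Ind}W\otimes V]=[V\otimes\cat{Ind}W]=[\cat{Ind}(\cat{Res}V\otimes W)]=\iota\bigl([\cat{Res}V]\,[W]\bigr)=\iota(v\,\rho(w)),
\]
where the penultimate equality uses that $\cat{Res}V\otimes W$ is a $kC_\alpha$-module with class $[\cat{Res}V][W]$ in $R'$.

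The only genuinely delicate points are bookkeeping: in (1) one must check that $v_{t+1}$ occurs in $R'$ precisely when its coefficient is nonzero, and in (3) one must first realise the given pair of basis elements as modules over a common pair $kC_\alpha\hookrightarrow kC_{\alpha+1}$ before invoking the projection formula. No further consistency check for $\iota$, $\rho$ is needed, as that has been granted already.
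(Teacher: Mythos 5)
Your proposal is correct and is essentially the argument the paper intends: Lemma \ref{indred} is stated as an immediate translation, via the isomorphism $\phi$ of Proposition \ref{charp}, of the restriction formula, the induction formula, and the projection formula recorded just before it, which is precisely what you carry out (the paper in fact omits the proof entirely). Your additional bookkeeping on the index ranges and the bilinearity reduction in (3) only makes explicit what the paper leaves tacit.
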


From Lemma \ref{indred} follows that for each $\alpha \in \mathbb{N}$ the image $\mathcal{V}_\alpha$ of $\iota^{\alpha}$ is spanned by all elements of the form $v_{p^\alpha s}$. Moreover, $\mathcal{V}_\alpha \subset R'$ is an ideal. 

\begin{pro}
For all $\alpha \in \mathbb{N}$
\[
\mathcal{V}_\alpha = (v_{p^\alpha}).
\]
\end{pro}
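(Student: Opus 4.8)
\emph{Proof proposal.}
The plan is to establish the two inclusions $(v_{p^\alpha})\subseteq\mathcal{V}_\alpha$ and $\mathcal{V}_\alpha\subseteq(v_{p^\alpha})$ separately, in both cases leaning on the three identities of Lemma~\ref{indred}. The preliminary step is to prove, by induction on $\alpha$, the pointwise formula $\iota^\alpha(v)w=\iota^\alpha\bigl(v\rho^\alpha(w)\bigr)$ for all $v,w\in R'$. The base case is trivial (for $\alpha=1$ it is exactly Lemma~\ref{indred}(3)), and the inductive step is one application of Lemma~\ref{indred}(3) to $\iota\bigl(\iota^\alpha(v)\bigr)w$, followed by the induction hypothesis applied to the factor $\iota^\alpha(v)\rho(w)$. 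Since $v_1=1$, Lemma~\ref{indred}(2) gives $\iota^\alpha(1)=v_{p^\alpha}$, so specialising $v=1$ yields $v_{p^\alpha}w=\iota^\alpha\bigl(\rho^\alpha(w)\bigr)$ for every $w\in R'$. As $R'$ is commutative, $(v_{p^\alpha})=v_{p^\alpha}R'=\iota^\alpha\bigl(\rho^\alpha(R')\bigr)\subseteq\iota^\alpha(R')=\mathcal{V}_\alpha$; this settles the first inclusion.

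For the reverse inclusion the crucial point is that $\rho\colon R'\to R'$ is surjective. Granting this, $\rho^\alpha$ is surjective too, so for an arbitrary $v\in R'$ we may choose $w\in R'$ with $\rho^\alpha(w)=v$, and then $\iota^\alpha(v)=\iota^\alpha\bigl(\rho^\alpha(w)\bigr)=v_{p^\alpha}w\in(v_{p^\alpha})$. Since $\mathcal{V}_\alpha=\iota^\alpha(R')$ is spanned by the elements $\iota^\alpha(v)$, this gives $\mathcal{V}_\alpha\subseteq(v_{p^\alpha})$, and hence equality.

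It remains to prove the surjectivity of $\rho$, which I would do by showing that every basis element $v_n$ ($n\ge1$) lies in the image, by induction on $n$. For $n=1$, Lemma~\ref{indred}(1) with $s=1$ gives $\rho(v_1)=v_1$, the $v_0$-term vanishing because $v_0=0$. For $n\ge2$, apply Lemma~\ref{indred}(1) to $s=(n-1)p+1$, for which $t=n-1$ and $r=1$; this yields $\rho(v_s)=v_n+(p-1)v_{n-1}$, so $v_n=\rho(v_s)-(p-1)v_{n-1}$ lies in $\rho(R')$ by the induction hypothesis. Since the $v_n$ generate $R'$ as an abelian group, $\rho$ is onto.

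The whole argument is formal bookkeeping once the surjectivity of $\rho$ is available, so that is the only step that requires an idea — and even it comes down to the single well-chosen substitution $s=(n-1)p+1$ in the restriction formula of Lemma~\ref{indred}(1). Alternatively one could invoke the already-noted fact that $\mathcal{V}_\alpha$ is an ideal to get $(v_{p^\alpha})\subseteq\mathcal{V}_\alpha$ for free, but deriving it from $v_{p^\alpha}w=\iota^\alpha(\rho^\alpha(w))$ keeps the proof self-contained. (The case $\alpha=0$ is the trivial identity $\mathcal{V}_0=R'=(v_1)$ and needs no separate treatment.)
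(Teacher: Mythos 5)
Your proof is correct, but for the inclusion $\mathcal{V}_\alpha\subseteq(v_{p^\alpha})$ it takes a genuinely different route from the paper. Both arguments hinge on iterating the projection formula $\iota(v)w=\iota(v\rho(w))$, and your iterated version $\iota^\alpha(v)w=\iota^\alpha(v\rho^\alpha(w))$ is established correctly. Where you diverge: the paper computes $\rho(w_{\alpha+1})=w_\alpha$ from Lemma~\ref{indred}(1), deduces $w_{\alpha+1}\iota(v)=\iota(w_\alpha v)$, and then shows by induction that $\prod_i w_{\alpha_i+\alpha}\cdot v_{p^\alpha}=\iota^\alpha\bigl(\prod_i w_{\alpha_i}\bigr)$; since $R'$ is generated as a ring by $\mathcal{W}$ (a consequence of Theorem~\ref{charprel}, i.e.\ of Green's theorem), this covers all of $\mathcal{V}_\alpha$. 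You instead prove that $\rho$ is surjective as a $\mathbb{Z}$-linear map, via the substitution $s=(n-1)p+1$ giving $\rho(v_s)=v_n+(p-1)v_{n-1}$, and then write any $\iota^\alpha(v)$ directly as $v_{p^\alpha}w$ with $\rho^\alpha(w)=v$. Your version is purely additive: it needs only the explicit action of $\rho$ on the basis $\mathcal{V}$ and is independent of the multiplicative generation of $R'$ by $\mathcal{W}$, so it is somewhat more self-contained. The paper's version, in exchange, records the structurally useful identity $\rho(w_{\alpha+1})=w_\alpha$ and exhibits explicit preimages of the generators of $\mathcal{V}_\alpha$ in terms of products of the $w_\beta$. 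For the easy inclusion $(v_{p^\alpha})\subseteq\mathcal{V}_\alpha$ the paper simply invokes the already-noted fact that $\mathcal{V}_\alpha$ is an ideal containing $\iota^\alpha(v_1)=v_{p^\alpha}$, which you also mention as an alternative; your derivation from $v_{p^\alpha}w=\iota^\alpha(\rho^\alpha(w))$ is equally valid.
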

\begin{proof}
Since $v_{p^\alpha} = \iota^\alpha(v_1)$, we have $(v_{p^\alpha}) \subset \mathcal{V}_{\alpha}$ for each $\alpha \in \mathbb{N}$. On the other hand note that 
\[
\rho (w_{\alpha+1}) = \rho(v_{p^{\alpha+1}+1} - v_{p^{\alpha+1}-1}) = v_{p^\alpha+1} + (p-1)v_{p^\alpha} - (p-1)v_{p^\alpha} - v_{p^\alpha-1} = w_{\alpha}.
\]
Hence $w_{\alpha +1}\iota(v) = \iota(w_\alpha v)$ and thus for any sequence of natural numbers $(\alpha_i)_{i=1}^n$,
\[
\prod_{i=1}^n w_{\alpha_i+\alpha}v_{p^\alpha} = \prod_{i=1}^n w_{\alpha_i+\alpha}\iota^\alpha(1) = \prod_{i=1}^{n-1} w_{\alpha_i+\alpha}\iota^\alpha(w_{\alpha_n}).
\]
By induction, the right hand side equals $\iota^\alpha\left(\prod_{i=1}^n w_{\alpha_i}\right)$ and thus $\mathcal{V}_\alpha \subset (v_{p^\alpha})$.
\end{proof}

Even though we have quite a lot of information about the ring $R'$, the problem of decomposing $J_1(s) \otimes J_1(t)$, or equivalently writing $v_sv_t$ as a linear combination of the elements of $\mathcal{V}$, remains. Below we shall describe a method of doing this by translating between polynomials in the elements of $\mathcal{W}$ and linear combinations of the elements of $\mathcal{V}$.

To write the elements of $\mathcal{V}$ as polynomials in the elements of $\mathcal{W}$ we proceed as follows. Let $s > 1$ and write $s = q +r$ for some $\alpha \in \mathbb{N}$ and $1 \le r \le (p-1)q$, where again $q=p^{\alpha}$. By Theorem \ref{charprel},
\begin{equation}\label{recursive}
v_s = \left\{
\begin{array}{lcr}
w_\alpha v_r + v_{s-2r} & \mbox{if} & 1 \le r \le q \\
w_\alpha v_r - v_{s-2q} & \mbox{if} &q < r \le (p-1)q \\
\end{array} \right.
\end{equation} 
Since $r$, $s-2r$ and $s-2q$ are all strictly smaller than $s$, repeated use of Equation (\ref{recursive}) will eventually yield $v_s$ written as a polynomial in the elements of $\mathcal{W}$, \eg\ for $p=3$,
\[
v_8 = w_1v_5-v_2 = w_1(w_1v_2+1)-w_0 = w_1^2w_0+w_1-w_0.
\]

Theorem \ref{charprel} describes how to write $w_\alpha v_r$ as a linear combination the elements $v_s$, where $s \le p^{\alpha+1}$, for all $r \le p^{\alpha+1}$. This yields a method for writing polynomials in the elements of $\mathcal{W}$ as linear combination of the elements of $\mathcal{V}$. Indeed, let $(\alpha_i)_{i=1}^n$ be a sequence of natural numbers such that $\alpha_{i} \le \alpha_{i+1}$. Then $w_{\alpha_1} = v_{p^{\alpha_i} +1} - v_{p^{\alpha_i} -1}$ and we can write $w_{\alpha_2}w_{\alpha_1}$ as a linear combination of the elements of $\mathcal{V}$ using Theorem \ref{charprel}. Moreover, since the $v_s$ that appear all have $s \le p^{\alpha_2 +1} \le p^{\alpha_3 +1}$ we can do the same for $w_{\alpha_3}w_{\alpha_2}w_{\alpha_1}$. Continuing in this fashion we will eventually obtain $w_{\alpha_n} \cdots \; w_{\alpha_1}$ written as a linear combination of the elements of $\mathcal{V}$. To illustrate: if $p = 3$, then
\[\begin{split}
w_1^2w_0=w_1^2(v_2) = w_1(v_5-v_1) = v_8+v_2-v_4+v_2 = v_8 -v_4+2v_2.
\end{split}\]

To find the explicit decomposition of $v_sv_t$ one can write both $v_s$ and $v_t$ as polynomials in the elements of $\mathcal{W}$ using Equation (\ref{recursive}), multiply the polynomials obtained and then use the method described above to write the result as a linear combination of the elements of $\mathcal{V}$.  Alternatively one can use the algorithm presented in \cite{iwamatsu07a}, which in most cases probably is quicker.

\subsection{The ring $\bar R$} \label{Rstreck}

As an abelian group, $\bar{R}$ is freely generated by its elements $[k[x]\slash f(x)]$,
$f(x)$ being a monic, irreducible polynomial with $f(0)\neq 0$. 
Let $P=P_k$ be the subset of $k(x)$ formed by all quotients $f(x)/g(x)$, where $f(x)$ and $g(x)$
are  monic polynomials with non-zero constant term. Clearly, $P$ is an 
abelian group under multiplication, isomorphic to $(\bar{R},+)$ via the map
\begin{equation} \label{piso}
  P\to \bar R,\quad \frac{\prod_i f_i(x)}{\prod_j g_j(x)} \mapsto 
  \sum_i [k[x]\slash f_i(x)] - \sum_j [k[x]\slash g_j(x)],
\end{equation}
where $f_i(x),g_j(x)\in k[x]$ are irreducible.

We now define a ring structure on $P$ as follows: Given monic, irreducible polynomials
$f(x),g(x)\in k[x]$ with $f(0)\neq0\neq g(0)$, define
\begin{equation*}
  f(x)\star g(x) =  \prod_{\latop{\l\in\Lambda}{\mu\in M}} (x-\l\mu),
\end{equation*}
where $\Lambda$ and $M$ are the sets of zeros in $K$ of $f(x)$ and $g(x)$ respectively. 
Since the polynomials of this type freely generate $(P,\cdot)$, this definition extends to
the entire $P$ by linearity. Statement \ref{sub2} of Proposition~\ref{subring} implies,
that this is precisely what is needed to match the multiplicative structure in $\bar{R}$. Thus we get
the following proposition.

\begin{pro}\label{Pring}
  The operation $\star$ defines a ring structure on $P$. With this structure, the map given by
  (\ref{piso}) is an isomorphism of rings. 
\end{pro}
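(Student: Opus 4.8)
The plan is to verify directly that $\star$ turns $(P,\cdot)$ into a commutative ring, and then observe that the additive isomorphism (\ref{piso}) automatically becomes a ring isomorphism because it was designed to carry $\star$ to multiplication in $\bar R$. First I would note that, since the monic irreducible polynomials with non-zero constant term freely generate the abelian group $(P,\cdot)$, it suffices to check the ring axioms on these generators and extend by $\mathbb{Z}$-bilinearity; this is the standard device for defining a product on a free abelian group. The multiplicative identity is $x-1$, since for monic irreducible $f(x)$ with zero set $\Lambda$ one has $(x-1)\star f(x)=\prod_{\l\in\Lambda}(x-\l)=f(x)$. Commutativity of $\star$ on generators is immediate from $\l\mu=\mu\l$.

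The substantive point is associativity. Given monic irreducibles $f,g,h$ with zero sets $\Lambda,M,N$ in $K$, I would argue that both $(f\star g)\star h$ and $f\star(g\star h)$ equal $\prod_{(\l,\mu,\nu)\in\Lambda\times M\times N}(x-\l\mu\nu)$, with multiplicities. The one thing that needs care here is that $f\star g$, defined a priori as a product of linear factors over $K$, actually lies in $k[x]$ (indeed in $P$), so that $\star$ is well-defined as an operation on $P$ and can be iterated; but this is exactly what Proposition~\ref{subring}\ref{sub2} gives, since it identifies $f\star g$ (up to the factorisation into $k$-irreducibles) with the polynomial occurring in the decomposition of $k[x]/f(x)\otimes k[x]/g(x)$, which by construction is in $k[x]$. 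Once $f\star g\in P$ is known, applying the defining formula again and using that the zero set of a product in $P$ is the multiset union of the zero sets of the factors, associativity reduces to associativity of multiplication in $K^\times$, i.e.\ to $(\l\mu)\nu=\l(\mu\nu)$.

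Distributivity of $\star$ over $+$ in $P$ (that is, over multiplication of quotients, since $(P,+)$ is $(P,\cdot)$) holds by construction: we defined $\star$ on all of $P$ precisely by extending bilinearly from the generators. It then remains to check that the additive isomorphism $P\to\bar R$ of (\ref{piso}) intertwines $\star$ with the multiplication of $\bar R$. On generators this is the content of Proposition~\ref{subring}\ref{sub2}: the image of $f(x)\star g(x)=\prod_{\l,\mu}(x-\l\mu)$ under (\ref{piso}) is $\sum_j[k[x]/h_j(x)]$, where the $h_j$ are the $k$-irreducible factors of that polynomial, and these are exactly the indecomposable summands of $k[x]/f(x)\otimes k[x]/g(x)$, i.e.\ the product $[k[x]/f(x)]\,[k[x]/g(x)]$ in $\bar R$. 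Since both $\star$ and the multiplication in $\bar R$ are $\mathbb{Z}$-bilinear and (\ref{piso}) is additive, agreement on generators forces agreement everywhere.

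The main obstacle, and the only place where the argument is not a one-line reduction to arithmetic in $K^\times$, is the well-definedness of $\star$ as an operation on $P$ — equivalently, that $f\star g$ really is a polynomial over $k$ with the expected zeros, so that iterated $\star$-products make sense. I would handle this by leaning on Proposition~\ref{subring}, which was proved via the Krull--Schmidt theorem after base change to $K$; no new work is needed. Everything else — identity, commutativity, associativity, distributivity, and the ring-isomorphism property of (\ref{piso}) — then follows by reducing to the generators and to the group structure of $K^\times$.
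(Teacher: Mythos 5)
Your proof is correct, and it rests on the same essential input as the paper's, namely statement (2) of Proposition~\ref{subring}; but the logical organisation is genuinely different. The paper gives no direct verification of the ring axioms for $\star$ at all: it observes that (\ref{piso}) is an additive bijection and that, by Proposition~\ref{subring}(2), it carries the product $[k[x]/f(x)]\,[k[x]/g(x)]$ in $\bar R$ exactly to $f(x)\star g(x)$, so the ring structure of $\bar R$ is simply transported to $P$ along the bijection --- associativity, distributivity and the identity for $\star$ are then inherited for free, and the map is a ring isomorphism by construction. You instead verify intrinsically that $(P,\star)$ is a commutative ring (identity $x-1$, associativity reduced to $(\l\mu)\nu=\l(\mu\nu)$ in $K^\times$ after the well-definedness of $f\star g$ as an element of $P$ is secured), and only then invoke Proposition~\ref{subring}(2) to see that (\ref{piso}) is multiplicative on generators. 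Your route is longer but buys something the paper only obtains later: it exhibits the ring structure of $P$ as a purely Galois-theoretic/arithmetic object, independent of the representation theory of $k[x]$, which is precisely the point of view underlying the subsequent identification $\bar R\iso(\mathbb{Z}K^\iota)^G$. Your handling of the one delicate point --- that $f\star g$ lies in $k[x]$ with non-zero constant term, so that $\star$ can be iterated --- via Proposition~\ref{subring}(2) is sound (one could alternatively note that the multiset $\{\l\mu\}$ is stable under the absolute Galois group and contains no zero).
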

Note that $\deg(f\star g)=\deg f \deg g$.

Shifting for a moment focus to $K$, the algebraic closure of $k$, we observe that the ring
$P_K$ is freely generated, as abelian group, by $\{x-\l \; | \; \l \in K^\iota\}$, where 
$K^\iota = K\setminus \{0\}$ is the group of invertible elements in $K$. From the
definition of the multiplication in $P_K$ we find that $P_K$ is isomorphic to
$\mathbb{Z}K^\iota$, the group ring of $K^\iota$, via $(x-\lambda)\mapsto \l$.

Let $G = \G{K}{k}$ be the absolute Galois group of $k$. Then  $G$ acts on $P_K$, by
transforming the coefficients of elements in $P_K$. Hence we obtain a $G$-action on
$\mathbb{Z}K^\iota$, which in fact comes from the natural $G$-action on
$K^\iota$. Proposition~\ref{Pring} yields the following corollary:

\begin{cor} There is an isomorphism of rings:
\[
\bar{R} \iso ({\mathbb{Z}K^\iota})^G
\]
Where $({\mathbb{Z}K^\iota})^G$ denotes the ring of invariants under $G$.
\end{cor}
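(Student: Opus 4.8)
The plan is to combine Proposition~\ref{Pring} with the identification $P_K\cong\mathbb{Z}K^\iota$ and then show that the isomorphism $(\ref{piso})$, when restricted to $P=P_k\subseteq P_K$, has image exactly the $G$-invariant part of $P_K\cong\mathbb{Z}K^\iota$. Concretely, there is a commuting square: on one side $P_k\hookrightarrow P_K$ (inclusion of rational functions with coefficients in $k$ into those with coefficients in $K$), on the other side $P_K\xrightarrow{\sim}\mathbb{Z}K^\iota$ via $(x-\l)\mapsto\l$, and the composite should land in $(\mathbb{Z}K^\iota)^G$ and be surjective onto it. Since by Proposition~\ref{subring} the multiplication $\star$ on $P_k$ is the restriction of the multiplication on $P_K$ (both are ``multiply the roots''), this composite is automatically a ring homomorphism, so the only thing to verify is that it is a bijection onto $(\mathbb{Z}K^\iota)^G$.

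First I would record that $G=\G{K}{k}$ acts on $K^\iota$, hence on $\mathbb{Z}K^\iota$ by permuting the basis elements $\l\in K^\iota$, and that under the isomorphism $P_K\cong\mathbb{Z}K^\iota$ this agrees with the action of $G$ on coefficients of rational functions — this is already noted in the excerpt just before the corollary. Next I would observe that an element of $P_K$ lies in the sub(group/ring) $P_k$ precisely when its numerator and denominator, after cancelling, have coefficients in $k$; equivalently, when as a formal $\mathbb{Z}$-linear combination $\sum_\l n_\l[x-\l]$ it is $G$-stable, because a monic polynomial $\prod_{\l}(x-\l)^{n_\l}$ (with the $n_\l\ge 0$) has coefficients in $k$ iff the multiset of its roots is permuted by $G$, i.e. iff the function $\l\mapsto n_\l$ is constant on $G$-orbits. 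Splitting a general element of $P_k$ as a quotient of two such polynomials gives the general case. This pins down the image of $P_k$ in $\mathbb{Z}K^\iota$ as exactly those $\sum n_\l\,\l$ with $n_{\sigma\l}=n_\l$ for all $\sigma\in G$, which is by definition $(\mathbb{Z}K^\iota)^G$.

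The one genuine point requiring care — and the main obstacle — is the two-way correspondence between irreducible monic polynomials $f(x)\in k[x]$ with $f(0)\ne0$ and finite $G$-orbits in $K^\iota$: that each such $f$ splits in $K[x]$ as $\prod_{\l\in\Lambda}(x-\l)$ over a single $G$-orbit $\Lambda$ (using that $k$ is perfect, so $f$ is separable and $G$ acts transitively on its roots), and conversely that each finite $G$-orbit $\Lambda\subset K^\iota$ arises this way from a unique irreducible $f$. Granting this, a basis element $[k[x]/f(x)]$ of $\bar R$ maps under the composite $\bar R\xrightarrow{\sim}P_k\hookrightarrow\mathbb{Z}K^\iota$ to the orbit sum $\sum_{\l\in\Lambda}\l$, and these orbit sums form precisely a $\mathbb{Z}$-basis of $(\mathbb{Z}K^\iota)^G$; hence the composite carries a basis to a basis and is an isomorphism of abelian groups, and being multiplicative it is a ring isomorphism. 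Finally I would remark that $G$ acts on the \emph{nonzero} elements $K^\iota$ here (rather than all of $K$) precisely because we excluded $f(0)=0$, matching the exclusion built into $\bar R$.
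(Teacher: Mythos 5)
Your proposal is correct and follows essentially the same route as the paper: identify $P_k$ inside $P_K\cong\mathbb{Z}K^\iota$ as exactly the $G$-invariants and then apply Proposition~\ref{Pring}. The paper's proof is just terser, deducing $P_k=P_K^G$ directly from $K(x)^G=K^G(x)=k(x)$, where you instead verify the same fact combinatorially via $G$-stable root multisets and orbit sums.
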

\begin{proof}
The ring $P_k$ is a subring of $P_K$ since $k(x) \subset K(x)$. Assume that $q(x) = f(x)/g(x) \in K(x)$, where $f(x)$ and $g(x)$ are  monic polynomials with non-zero constant term. Then $q(x)$ is fixed by $G$ if and only if $q(x) \in K^G(x) = k(x)$. Hence $P_k = P_K^G$. Since $P_K^G \iso ({\mathbb{Z}K^\iota})^G$ the result follows from Proposition~\ref{Pring}.
\end{proof}

Assume $f(x)$ and $g(x)$ are irreducible polynomials in $P$. 
In studying the product $f(x)\star g(x)$, we may consider their
zeros in splitting fields, instead of in the algebraic closure $K$. 
Thus let $E$ be a splitting field of $f(x)g(x)$, and $B$ and $C$ the splitting
fields inside $E$ of $f(x)$ and $g(x)$ respectively.
Denote by $\Lambda\subset B$ the set of zeros of $f(x)$ and by $M\subset C$ the set of zeros of
$g(x)$.
Set $F=B\cap C$. Given any element $\a\in E$ and intermediate field $L$ of $E\supset k$,
denote by $m_L(\a)(x)\in L[x]$ the minimal polynomial of $\a$ over $L$.
Note that, since $k$ is perfect, $E\supset L$ is Galois, and 
\begin{equation} \label{rotarunt}
m_L(\a)(x)=\prod_{\beta\in G\cdot\a}(x-\beta),
\end{equation}
where $G=\G{E}{L}$ is the Galois group of $E\supset L$ and $G\cdot \a$ the orbit of $\a$
under $G$. 

We first consider the case when $B$ and $C$ are linearly disjoint, that is, when $F=k$.
\begin{pro} \label{linob}
  Suppose $F=k$, and let $\l\in B$ and $\mu\in C$ be zeros of $f(x)$ and $g(x)$
  respectively. 
  Now $f(x)\star g(x)=m_k(\l\mu)(x)^l$, where $l=\frac{\deg f \deg g}{\deg m_k(\l\mu)}$ is
  a common divisor of $\deg f$ and $\deg g$.
  Moreover, $k$ contains $l$ distinct $l$th roots of unity.
  In particular, $l$ cannot be a multiple of $\cha k$.
\end{pro}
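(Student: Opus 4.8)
The plan is to carry out the computation inside the splitting field $E$, which we may take to be the compositum $BC$ (the fields $B$ and $C$, being the splitting fields of $f(x)$ and $g(x)$, already contain all zeros of $f(x)g(x)$), and to exploit the fact that linear disjointness splits the Galois group. First I would record the setup: since $k$ is perfect, $B\supset k$ and $C\supset k$ are Galois, and the hypothesis $F=B\cap C=k$ yields an isomorphism
\[
\G{E}{k}\iso \G{B}{k}\times\G{C}{k}=:G_B\times G_C,\qquad \sigma\mapsto(\sigma|_B,\sigma|_C).
\]
Writing $G=\G{E}{k}$, irreducibility of $f(x)$ and $g(x)$ makes $G_B$ act transitively on $\Lambda$ and $G_C$ on $M$, hence $G$ acts transitively on $\Lambda\times M$.

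Next I would study $f(x)\star g(x)=\prod_{(\l',\mu')\in\Lambda\times M}(x-\l'\mu')$, a polynomial of degree $\deg f\deg g$, through the multiplication map $\pi:\Lambda\times M\to E$, $(\l',\mu')\mapsto\l'\mu'$. This map is $G$-equivariant, so its image is a single $G$-orbit; as it contains $\l\mu$, the image equals $G\cdot(\l\mu)$, which by (\ref{rotarunt}) is exactly the zero set of $m_k(\l\mu)(x)$. Transitivity of $G$ on $\Lambda\times M$ forces all fibres of $\pi$ to have a common cardinality $l$, and therefore $f(x)\star g(x)=m_k(\l\mu)(x)^l$ with $l\cdot\deg m_k(\l\mu)=\deg f\deg g$, which is the asserted value of $l$.

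The real content lies in computing the fibre $\pi^{-1}(\l\mu)$. The key point is that if $\l'\mu'=\l\mu$ with $\l'\in\Lambda$ and $\mu'\in M$, then $\l'/\l=\mu/\mu'$ lies in $B\cap C=k$; setting $c=\l'/\l$, the map $(\l',\mu')\mapsto c$ is a bijection from $\pi^{-1}(\l\mu)$ onto $\{c\in k\setminus\{0\}\mid c\l\in\Lambda,\ c^{-1}\mu\in M\}$. Let $\Gamma_f=\{c\in k\setminus\{0\}\mid c\Lambda=\Lambda\}$, a finite subgroup of the multiplicative group of $k$, and define $\Gamma_g$ analogously. Transitivity of $G_B$ on $\Lambda$ shows that $c\l\in\Lambda$ already forces $c\Lambda\subseteq\Lambda$, so $c\in\Gamma_f$; hence the set above is precisely $\Gamma_f\cap\Gamma_g$, and $l=|\Gamma_f\cap\Gamma_g|$.

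Finally I would conclude as follows. Since $\Gamma_f$ acts freely on the finite set $\Lambda$ by multiplication, $|\Gamma_f|$ divides $\deg f$; likewise $|\Gamma_g|$ divides $\deg g$, so $l=|\Gamma_f\cap\Gamma_g|$ is a common divisor of $\deg f$ and $\deg g$. Being a finite subgroup of the multiplicative group of $k$, $\Gamma_f\cap\Gamma_g$ is cyclic of order $l$, so its elements are $l$ distinct roots of $X^l-1$ in $k$; thus $k$ contains $l$ distinct $l$th roots of unity, which in characteristic $p>0$ is impossible if $p\mid l$, because then the group would contain an element $c$ with $c^p=1$ while $c^p-1=(c-1)^p$ forces $c=1$. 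Hence $l$ is not a multiple of $\cha k$. The step I expect to be the main obstacle --- or rather the one carrying the genuine idea --- is the identification in the third paragraph, where $\l'/\l$ and $\mu/\mu'$ get pinned down inside $B\cap C=k$ and turn the multiplicity $l$ into the order of a group of roots of unity contained in $k$; the remaining care-points are that linear disjointness really gives $\G{E}{k}\iso G_B\times G_C$ and that $\{c\mid c\l\in\Lambda\}$ is the whole of $\Gamma_f$, not just a subset.
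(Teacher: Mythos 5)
Your argument is correct and follows essentially the same route as the paper: the isomorphism $\G{E}{k}\iso\G{B}{k}\times\G{C}{k}$ gives transitivity on $\Lambda\times M$ and hence $f(x)\star g(x)=m_k(\l\mu)(x)^l$, and the multiplicity is controlled by the ratios $\l'/\l=\mu/\mu'\in B\cap C=k$, which permute $\Lambda$ and are roots of unity. Your packaging of these ratios as the group $\Gamma_f\cap\Gamma_g$ (cyclic, acting freely on $\Lambda$ and $M$) is a slightly cleaner way to organise the paper's count of the elements $a_i$, but it is the same underlying argument.
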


\begin{proof}
Let $G=\G{E}{k}$ be the Galois group of the extension $E\supset k$. The Fundamental
Theorem of Galois theory implies (see \eg\ \cite[Corollary~91]{rotman}) that the map
\begin{equation*}
G\to\G{B}{k}\times\G{C}{k},\:\sigma\mapsto(\sigma|_B,\sigma|_C)
\end{equation*}
is an isomorphism of groups. Hence 
\begin{equation*}
  G\cdot\l\mu=\{\sigma(\l)\tau(\mu) \mid \sigma\in\G{B}{k},\: \tau\in\G{C}{k}\}=
  \{\a\beta \mid \a\in\Lambda,\:\beta\in M \}.
\end{equation*}
Thus the set of zeros of $f(x)\star g(x)$ equals the set of zeros of $m_k(\l\mu)(x)$.
As the latter polynomial is irreducible, this means that $f(x)\star g(x)=m_k(\l\mu)(x)^l$
for some positive integer $l$. 

Now there exist precisely $l$ distinct elements $\l_1,\ldots,\l_l\in \Lambda$ with
corresponding $\mu_1,\ldots,\mu_l\in M$ (say $\l_1=\l,\:\mu_1=\mu$) such that
$\l_i\mu_i=\l\mu$.
Set $a_i=\frac{\l_i}{\l}=\frac{\mu}{\mu_i}\in B\cap C=k$.

First note that multiplication with any $a_i$ permutes $\Lambda$. Because for any
$\a\in\Lambda$, there exists a $\sigma\in\G{B}{k}$ such that $\a=\sigma(\l)$, and hence
$a_i\a=a_i\sigma(\l)=\sigma(\a_i\l)=\sigma(\l_i)\in\Lambda$.
This implies that $a_i^m\l=\l$ for some positive integer $m$, and thus $a_i^m=1$. Let
$m_i$ be the least such number. Now $\tau_i\mapsto a_i$ defines a faithful action of
$C_{m_i}=\<\tau_i\>$,  the cyclic group of order $m_i$, on $\Lambda$.

By symmetry, multiplication with $\frac{1}{a_i}$ permutes $M$. As
$(a_i\l_j)(\frac{1}{a_i}\mu_j)=\l\mu$, the set $\{\l_j\}_{j=1}^l$ is invariant under the
$C_{m_i}$-action. Since every orbit has length $m_i$,  $l$ is a multiple of $m_i$. It
follows that $a_i$ is a $l$th root of unity. 
We conclude, that $a_i,\;i=1,\ldots,l$ are distinct $l$th roots of unity.

Considering a primitive $l$th root of unity $a_i$, we have $m_i=l$. Since $C_{m_i}$ acts
faithfully on $\Lambda$, $l$ divides $|\Lambda|=\deg f$. 
By an analogous argument, $l\mid\deg g$.
Moreover, from the identity $f(x)\star g(x)=m_k(\l\mu)(x)^l$ follows that
$l=\frac{\deg f \deg g}{\deg m_k(\l\mu)}$. 

If $\cha k=p>0$, then $k$ has only one $p$th root of unity. Thus for $l\in p\mathbb{Z}$, there
cannot exist $l$ different $l$th roots of unity in $k$. 
\end{proof}

As for the general case, $f(x)$ and $g(x)$ may be decomposed into irreducible factors over
$F$: $f(x)=\prod_i f_i(x)$ and $g(x)=\prod_j g_j(x)$. Taking $\l_i\in B$ and
$\mu_j\in C$ to be zeros of $f_i(x)$ and $g_j(x)$, by Proposition~\ref{linob} we get
\begin{equation} \label{allm}
  f(x)\star g(x)=\prod_{i,j}f_i(x)\star g_j(x)=\prod_{i,j}m_F(\l_i\mu_j)(x)^{l_{ij}}
\end{equation}
with $l_{ij}=\frac{\deg f_i \deg g_j}{\deg m_F(\l_i\mu_j)}$ for all $i,j$.
Note, however, that the factors $m_F(\l_i\mu_j)(x)$ are in general not in $k[x]$, even
though their product is.

The following result tells how to combine the factors $m_F(\l_i\mu_j)(x)$ in
Equation~(\ref{allm}) to obtain the irreducible factors of $f(x)\star g(x)$
over $k$.
The Galois group $\G{F}{k}$ acts on $F[x]$ by transforming the coefficients.
If $\sigma\in\G{F}{k}$ is an automorphism, the the image of $p(x)\in F[x]$ under $\sigma$
is denoted by $\sigma^\ast p(x)$.
\begin{pro}
  Let $\l\in \Lambda$ and $\mu\in M$. If
  $X=\G{F}{k}\cdot m_F(\l\mu)$ denotes the orbit of $m_F(\l\mu)$ under
  $\G{F}{k}$, then
  \begin{equation*}
    m_k(\l\mu)(x)=\prod_{h\in X}h(x).
  \end{equation*}
\end{pro}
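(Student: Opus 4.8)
The plan is to show that, over $F$, the polynomial $m_k(\l\mu)(x)$ factors exactly into the distinct $\G{F}{k}$-conjugates of $m_F(\l\mu)(x)$, with nothing left over. Write $\nu=\l\mu$. First I would collect the structural facts needed. Since $B$ and $C$ are splitting fields over $k$, they are normal over $k$; hence $F=B\cap C$ is normal over $k$, and as $k$ is perfect, $F\supset k$ is Galois. Consequently $\G{E}{F}$ is a normal subgroup of $\G{E}{k}$, with quotient $\G{F}{k}$. I would also note that $m_k(\nu)(x)$ splits in $E$: every $k$-conjugate of $\nu$ is a product of a $k$-conjugate of $\l$, which lies in $B$, and a $k$-conjugate of $\mu$, which lies in $C$, and therefore lies in $E$.

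Next I would factor $m_k(\nu)(x)=p_1(x)\cdots p_r(x)$ into monic irreducibles over $F$. Because $k$ is perfect, $m_k(\nu)(x)$ is separable, so the $p_i(x)$ are pairwise distinct; and since $m_F(\nu)(x)$ divides $m_k(\nu)(x)$ in $F[x]$ and is monic irreducible over $F$, it is one of these factors, say $p_1(x)=m_F(\nu)(x)$. The heart of the argument is then to prove that $\G{F}{k}$ permutes $\{p_1,\dots,p_r\}$ transitively. That $\G{F}{k}$ permutes the $p_i$ is immediate, since $\sigma^\ast m_k(\nu)=m_k(\nu)$ (its coefficients lie in $k$) and factorisation over $F$ is unique. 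For transitivity I would fix $i,j$, choose roots $\beta_i,\beta_j\in E$ of $p_i$ and $p_j$, use that they are conjugate over $k$ to obtain $\tau\in\G{E}{k}$ with $\tau(\beta_i)=\beta_j$, and then check that $\sigma:=\tau|_F$ carries $p_i$ to $p_j$. This is where Equation~(\ref{rotarunt}) enters: writing $p_i(x)=m_F(\beta_i)(x)=\prod_{\gamma\in\G{E}{F}\cdot\beta_i}(x-\gamma)$ and applying $\tau$ coefficientwise, the normality of $\G{E}{F}$ in $\G{E}{k}$ gives $\tau\cdot\G{E}{F}\cdot\beta_i=\G{E}{F}\cdot\tau\beta_i=\G{E}{F}\cdot\beta_j$, so the image is $m_F(\beta_j)(x)=p_j(x)$; on the other hand $\tau$ acts on the $F$-coefficients of $p_i(x)$ precisely as $\sigma=\tau|_F$, so $\sigma^\ast p_i(x)=p_j(x)$.

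Once transitivity is established, the orbit $X=\G{F}{k}\cdot m_F(\nu)=\G{F}{k}\cdot p_1$ is all of $\{p_1,\dots,p_r\}$, and hence $\prod_{h\in X}h(x)=p_1(x)\cdots p_r(x)=m_k(\nu)(x)$, which is the claim. I expect the main obstacle to be the transitivity step — specifically, matching the "combinatorial" action of $\G{F}{k}$ on the roots (realised by lifting to $\G{E}{k}$) with its "algebraic" action $\sigma\mapsto\sigma^\ast$ on the coefficients of the $F$-factors. Normality of $\G{E}{F}$ in $\G{E}{k}$, which comes from $F\supset k$ being Galois, is exactly what makes the two actions compatible, and separability of $m_k(\nu)(x)$ (hence $k$ perfect) is what guarantees that $X$ is genuinely the \emph{set} of all irreducible factors rather than a multiset with repetitions.
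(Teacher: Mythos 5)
Your proof is correct and follows essentially the same route as the paper's: both rest on Equation~(\ref{rotarunt}) together with the fact that $F=B\cap C$ is Galois over $k$, so that restriction $\G{E}{k}\to\G{F}{k}$ is a well-defined epimorphism (equivalently, $\G{E}{F}\trianglelefteq\G{E}{k}$). The only difference is direction of travel -- the paper partitions the $\G{E}{k}$-orbit of $\l\mu$ into $\G{E}{F}$-suborbits and identifies the resulting $F$-minimal polynomials with the orbit $X$, while you factor $m_k(\l\mu)$ over $F$ first and verify transitivity of the $\G{F}{k}$-action by lifting automorphisms -- which is the same argument read backwards.
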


\begin{proof}
Since $B$ and $C$ both are Galois extensions of $k$, so is $F=B\cap C$.
Hence $\sigma(F)=F$ for all $\sigma\in\G{E}{k}$, and the restriction map
$\G{E}{k}\to\G{F}{k},\:\sigma\mapsto\sigma|_F$ is an epimorphism of groups.

By Equation~(\ref{rotarunt}), we have
\begin{equation*}
  m_k(\l\mu)(x)=\prod_{\a\in\G{E}{k}\cdot\l\mu}(x-\a)=\prod_{h\in Y}h(x),
\end{equation*}
where
$Y=\{m_F(\a) \mid \a\in\G{E}{k}\cdot\l\mu\}= 
\{m_F(\sigma(\l\mu)) \mid \sigma\in\G{E}{k}\}$.
But, clearly, $m_F(\sigma(\l\mu))=\sigma|_F^\ast m_F(\l\mu)$, whence
\begin{equation*}
Y=\{\sigma|_F^\ast m_F(\l\mu) \mid \sigma\in\G{E}{k}\}=
\{\tau^\ast m_F(\l\mu) \mid \tau\in\G{F}{k}\}=X.
\end{equation*}
\end{proof}

As an illustration of the above, we consider the case when $\deg f=\deg g =2$.
Here, $[B:k]=[C:k]=2$, and $\G{B}{k}=\<\sigma\>$ and $\G{C}{k}=\<\tau\>$ are cyclic of
order two. We have $f(x)=(x-\l)(x-\sigma(\l))\in B[x]$ and
$g(x)=(x-\mu)(x-\tau(\mu))\in C[x]$.

First assume that $F=B\cap C=k$. By Proposition~\ref{linob}, 
$f(x)\star g(x)=m_k(\l\mu)(x)^l$, with $l\deg m_k(\l\mu)=\deg f \deg g = 4$ and $l\mid\deg
f$. 
Consequently, $(l,\deg m_k(\l\mu))\in\{(2,2),(1,4)\}$.
The case $l=2$ occurs when the zeros of 
\begin{equation*}
  f(x)\star g(x) = \left(x-\l\mu\right)\left(x-\l\tau(\mu)\right)\left(x-\sigma(\l)\mu\right)\left(x-\sigma(\l)\tau(\mu)\right)
\end{equation*}
have multiplicity two, whereas $l=1$ means that $f(x)\star g(x)$ is irreducible,
and thus has distinct zeros. Since $\sigma(\l)\mu\neq\l\mu\neq\l\tau(\mu)$, it follows
that $l=2$ if and only if $\l\mu=\sigma(\l)\tau(\mu)$.
If so, then $\frac{\l}{\sigma(\l)}=\frac{\tau(\mu)}{\mu}=a\in B\cap C=k$, and thus
$\l=a\sigma(\l)$, and $\l^2=a\l\sigma(\l)\in k$. Similarly, $\mu^2\in k$. This implies that
$f(x)=x^2-b$ and $g(x)=x^2-c$ for some $b,c\in k\setminus\{0\}$. Conversely, it is clear
that if $f(x)$ and $g(x)$ have this form, then $\sigma(\l)=-\l$ and $\tau(\mu)=-\mu$, hence
$\sigma(\l)\tau(\mu)=\l\mu$ and $l=2$. 

Assume instead $F=B=C$, and let $\kappa$ be the non-trivial automorphism of $F$. Then
\begin{equation*}
  f(x)\star g(x) =
  \left(x-\l\mu\right)\left(x-\kappa(\l\mu)\right)\left(x-\l\kappa(\mu)\right)\left(x-\kappa(\l)\mu\right)
  = h_1(x)h_2(x)
\end{equation*}
where $h_1(x)=(x-\l\mu)(x-\kappa(\l\mu))$ and $h_2(x)=(x-\l\kappa(\mu))(x-\kappa(\l)\mu)$
are distinct polynomials in $k[x]$.
Certainly, $h_1(x)$ is reducible if $\l\mu\in k$ and $h_2(x)$ is reducible if
$\l\kappa(\mu)\in k$. 
If both $h_1(x)$ and $h_2(x)$ are reducible, then
$(\l+\kappa(\l))\mu=\l\mu+\kappa(\l)\mu\in k$ and thereby, since $\l+\kappa(\l)\in k$ and
$\mu\not\in k$, $\kappa(\l)=-\l$. Similarly, $\kappa(\mu)=-\mu$ in this case. Hence
$f(x)\star g(x)$ splits into linear factors if and only if $f(x)=x^2-b$ and $g(x)=x^2-c$
for some (non-squares) $b,c\in k$.

As an example, let $k=\R$ and
$f(x)=g(x)=x^2-x+1\in\R[x]$. Then $F=B=C=\C$ and 
\begin{align*}
h_1(x)&=\left(x-\left(e^{i\frac{\pi}{3}}\right)^2\right)\left(x-\left(e^{-i\frac{\pi}{3}}\right)^2\right)=
x^2-x+1, \\
h_2(x)&=\left(x-e^{i\frac{\pi}{3}}e^{-i\frac{\pi}{3}}\right)^2=(x-1)^2, 
\quad\mbox{and thus} \\
f(x)\star g(x) &= (x^2-x+1)(x-1)^2.
\end{align*}
We find it notable that the irreducible factors in this product occur with different
multiplicities.

\subsection{Clebsch-Gordan formulae over real closed fields}

We conclude this section by stating the Clebsch-Gordan formulae for endomorphisms over
real closed fields. Apart from the existing solution for algebraically closed fields of
characteristic zero, this is the only situation in which we have been able to obtain
completely explicit formulae for the structure of the representation ring. 
Partly, this is due to the fact that over fields which are not algebraically nor real
closed, no convenient description of the irreducible polynomials is known, and hence, not
even a basis for the ring $\bar{R}$ can be explicitly written down.
As for algebraically closed fields of positive characteristic, the behaviour of $R$ is
completely determined by the subring $R'$. Here one could in principle hope to find a
closed formula for the product $v_iv_j$, improving upon the recursive descriptions given
in this article and in \cite{iwamatsu07a}.

A field is, by definition, \emph{real closed} if it has index two in its algebraic
closure. Any real closed field has characteristic zero, and the algebraic closure is
obtained by adjoining the square root of minus one. Assume, for what remains of this
section, that $k$ is real closed, and that $i\in K$ is a square root of minus one.
Let $\l\mapsto\bar{\l}$ denote the non-trivial automorphism of $K$ over $k$.

Any indecomposable endomorphism of a finite-dimensional vector space over $k$ can be
written, in a suitable basis, either as a Jordan block $J_\l(l)$ with $\l\in k$, or as
\begin{equation}
  \tilde{R}_\l(l) = 
  \left(\begin{smallmatrix}
    R_\l & \mathbb{I}_2 \\
    & \ddots & \ddots \\
    && R_\l & \mathbb{I}_2 \\
    &&& R_\l \\
  \end{smallmatrix} \right)
  \in k^{2l\times 2l}
\end{equation}
where $\l=a+bi\in K\setminus k$ and 
$R_\l=\left(\begin{smallmatrix} a&-b\\b&a\end{smallmatrix}\right)$.
By Lemma~\ref{separate}, $\tilde{R}_\l(l)\sim J_1(l)\otimes R_\l$. 
Consequently, $\tilde{R}_\l(l)\otimes \tilde{R}_\mu(m)\sim (J_1(l)\otimes J_1(m))\otimes
(R_\l\otimes R_\mu)$ for any $\l,\mu\in K\setminus k$ and $l,m\in\mathbb{N}\setminus\{0\}$.
The product $J_1(l)\otimes J_1(m)$ is completely governed by Theorem~\ref{jordan}. Thus,
to find the decomposition of $\tilde{R}_\l(l)\otimes \tilde{R}_\mu(m)$, is suffices to
determine the decomposition of $R_\l\otimes R_\mu$. As we shall see, this problem was
essentially solved in the previous section.

\begin{thm} \label{realjordan}
  Assume $k$ is real closed. Let $l$ and $m$ be positive integers, and $\l,\mu\in K$
  non-zero elements. The following formulae hold:
  \begin{enumerate}
  \item $J_\l(1)\otimes J_\mu(1) \sim J_{\l\mu}(1)$ if $\l,\mu\in k$,
  \item $J_\l(1)\otimes R_\mu \sim R_{\l\mu}$ if $\l\in k$, $\mu\in K\setminus k$.
  \end{enumerate}
Suppose $\l,\mu\not\in k$. Then
\begin{enumerate}
  \setcounter{enumi}{2}
  \item $R_\l\otimes R_\mu\sim 2 J_{\l\mu}(1) \oplus 2 J_{-\l\mu}(1)$ 
    if $\l,\mu\in \spn_k \{i\}$,
  \item $R_\l\otimes R_\mu \sim 2 J_{\l\mu}(1) \oplus R_{\bar{\l}\mu}$ 
    if $\l,\mu\not\in\spn_k\{i\}$, $\bar{\l}=r\mu$ for some $r\in k$, 
  \item $R_\l\otimes R_\mu \sim 2 J_{\bar{\l}\mu}(1) \oplus R_{\l\mu}$ 
    if $\l,\mu\not\in\spn_k\{i\}$, $\l=r\mu$ for some $r\in k$, 
  \item $R_\l\otimes R_\mu \sim R_{\l\mu}\oplus R_{\bar{\l}\mu}$ otherwise.
  \end{enumerate}
\end{thm}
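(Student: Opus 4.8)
The whole theorem reduces, as the text just explained, to decomposing $R_\l\otimes R_\mu$ for $\l,\mu\in K\setminus k$, since items (1) and (2) are immediate (item (1) is the trivial case $l=m=1$ of Theorem~\ref{jordan}, and item (2) is $J_1(1)\otimes J_1(1)\otimes J_\l(1)\otimes R_\mu\sim R_{\l\mu}$, using $J_\l(1)\otimes R_\mu\sim R_{\l\mu}$ which follows from rescaling as in Lemma~\ref{separate}). For the remaining four cases I would work entirely with $2\times 2$ real matrices. The key observation is that $R_\l$, for $\l=a+bi$, is the matrix of multiplication by $\l$ on $K$ viewed as a $2$-dimensional $k$-vector space with basis $\{1,i\}$; equivalently, the $k[x]$-module $k^2$ with operator $R_\l$ is isomorphic to $k[x]/m_k(\l)(x)$, where $m_k(\l)(x)=(x-\l)(x-\bar\l)\in k[x]$ is the minimal polynomial of $\l$ over $k$ (this uses $\l\notin k$, so $m_k(\l)$ has degree $2$). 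Hence in the ring $\bar R$ (or rather in $P_k$), the class of $R_\l\otimes R_\mu$ is exactly $m_k(\l)\star m_k(\mu)$, and Proposition~\ref{subring}(2) tells us its zeros in $K$ are the four products $\l\mu,\l\bar\mu,\bar\l\mu,\bar\l\bar\mu$, counted with multiplicity.

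So the strategy is: compute these four products in each case, determine which coincide and which are real, and read off the decomposition into blocks $J_c(1)$ (for real eigenvalues, possibly with multiplicity) and $R_c$ (for conjugate pairs $c,\bar c$ with $c\notin k$). Note $\overline{\l\mu}=\bar\l\bar\mu$ always, so $\{\l\mu,\bar\l\bar\mu\}$ is a conjugate pair and $\{\l\bar\mu,\bar\l\mu\}$ is a conjugate pair; each pair is either a genuine conjugate pair (contributing an $R$-block) or consists of two equal real numbers (contributing $2J_c(1)$). The real case for the first pair is $\l\mu=\bar\l\bar\mu$, i.e. $\l\mu\in k$; for the second pair it is $\l\bar\mu\in k$. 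Now I would just case-split on the ``realness'' of $\l/i$ and $\mu/i$: (3) $\l,\mu\in\spn_k\{i\}$ means $\l=si$, $\mu=ti$ with $s,t\in k$, so $\l\mu=-st\in k$ and $\l\bar\mu=st\in k$ — both pairs real, giving $2J_{\l\mu}(1)\oplus2J_{-\l\mu}(1)$ (and $-\l\mu=st\neq0$). For (4)--(6), $\l,\mu\notin\spn_k\{i\}$: then $\l\mu\in k$ would force, writing $\l=a+bi$, $\mu=c+di$ with $a,c\neq0$, the imaginary part $ad+bc=0$ and — combined with $\l\mu\in k$ being equivalent to $\bar\l=\l\mu/\mu\cdot(\mu/\bar\mu)$... cleaner: $\l\mu\in k\iff \bar\l\bar\mu=\l\mu\iff \bar\l/\mu = \l/\bar\mu \in k$? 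I would instead observe directly: $\l\bar\mu\in k\iff \l\bar\mu=\overline{\l\bar\mu}=\bar\l\mu\iff \l/\bar\l=\mu/\bar\mu$, and $\l\mu\in k\iff\l/\bar\l=\bar\mu/\mu$. Since $\l/\bar\l$ and $\mu/\bar\mu$ are both elements of $K$ of norm $1$ (as $(\l/\bar\l)\overline{(\l/\bar\l)}=1$) and not equal to $1$ (as $\l\notin k$), and $\l/\bar\l=-1$ exactly when $\l\in\spn_k\{i\}$, the three sub-cases are: $\l/\bar\l=\bar\mu/\mu$ (then first pair real: case with $2J_{\l\mu}(1)$; this is ``$\bar\l=r\mu$'', since $\l/\bar\l=\bar\mu/\mu\iff \bar\l\mu = \l\bar\mu$... let me recheck against the stated hypothesis $\bar\l = r\mu$, $r\in k$ — yes, $\bar\l=r\mu$ gives $\l\bar\mu = r\l\mu/r$... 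I'd verify $\bar\l=r\mu\Rightarrow\l\bar\mu=\overline{\bar\l}\cdot\overline{r\mu}=\overline{\bar\l\cdot r\mu}\in k$, wait $\l=\overline{\bar\l}=\overline{r\mu}=r\bar\mu$ so $\l\mu=r\bar\mu\mu=r|\mu|^2_{\text{norm}}\in k$; so $\bar\l=r\mu$ is the case $\l\mu\in k$, giving $2J_{\l\mu}(1)\oplus R_{\bar\l\mu}$ as in (4)); symmetrically ``$\l=r\mu$'' gives $\l\bar\mu\in k$ hence $2J_{\bar\l\mu}(1)\oplus R_{\l\mu}$, case (5); and otherwise neither pair is real, giving $R_{\l\mu}\oplus R_{\bar\l\mu}$, case (6). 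I should double-check that in (4) the pair $\{\l\bar\mu,\bar\l\mu\}$ is genuinely non-real, i.e. not simultaneously forced real, which cannot happen since that would put us in case (3); and that $\l\mu\neq0$ throughout.

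\textbf{The main obstacle.} The genuine content is checking that a $2\times2$ real block with a conjugate pair of eigenvalues $\{c,\bar c\}$, $c\notin k$, is similar to $R_c$, and that a pair of equal real eigenvalues coming from $R_\l\otimes R_\mu$ yields $2J_c(1)$ rather than a single $J_c(2)$ — i.e. that the relevant $4\times4$ real matrix is diagonalisable in those spots. This is where I need to be slightly careful: Proposition~\ref{subring}(2) only records the multiset of eigenvalues, not the block sizes. The clean way around it: $R_\l\otimes R_\mu$ is the operator of multiplication by $\l\otimes\mu$ on $K\otimes_k K$, and since $\l,\mu$ generate separable (hence, as $k$ is perfect, étale) extensions, $K\otimes_k K\cong k[x]/m_k(\l)(x)\otimes_k k[y]/m_k(\mu)(y)$ is a product of fields (a finite étale $k$-algebra), on which multiplication by any element is semisimple. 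Therefore $R_\l\otimes R_\mu$ is a semisimple operator, so every Jordan block has size $1$, and the decomposition is forced by the eigenvalue multiset alone. Spelling out this semisimplicity argument — or, alternatively, just exhibiting the eigenvectors of $R_\l\otimes R_\mu$ explicitly in $K\otimes_k K$ — is the one spot that needs real (if short) justification; everything else is bookkeeping on the four products $\l^{\pm}\mu^{\pm}$.
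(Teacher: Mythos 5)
Your proposal is correct and follows essentially the same route as the paper: identify $R_\l$ with $k[x]/m_k(\l)(x)$, reduce to factoring $m_k(\l)\star m_k(\mu)$ via the four products of conjugates (exactly the $h_1(x)h_2(x)$ factorization from the end of Section~\ref{Rstreck}), and case-split on which conjugate pairs are real. The semisimplicity issue you flag at the end is in fact already settled by Proposition~\ref{subring}, whose Krull--Schmidt argument shows all exponents $d_j$ equal $1$, so your \'etale-algebra justification, while correct, duplicates an existing ingredient rather than supplying a missing one.
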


\begin{proof}
The two first relations are immediate, since $J_\l(1)$ is simply a scalar.
For the remaining three, set $f(x)=m_k(\l)(x)=(x-\l)(x-\bar{\l})$ and
$g(x)=m_k(\mu)(x)=(x-\mu)(x-\bar{\mu})$, where $\l,\mu\in K\setminus k$. The problem is to determine the decomposition of
$f(x)\star g(x)$ into irreducible factors over $k$. Clearly, $K$ is a common splitting
field of $f(x)$ and $g(x)$. Hence, we are in the situtation treated the second to last
paragraph of Section~\ref{Rstreck}.

The polynomial $f(x)\star g(x)=h_1(x)h_2(x)$ decomposes into linear factors if and only
if both $f(x)$ and $g(x)$ have trace zero, \ie, if $\l,\mu\in \spn_k\{i\}$. In this case,
$f(x)\star g(x)=(x-\l\mu)^2(x-\bar{\l}\mu)^2=(x-\l\mu)^2(x-(-\l\mu))^2$, which corresponds
to the third relation above.

The condition $\l\mu\in k$ is equivalent to $\bar{\l}\in\spn_k\{\mu\}$. Hence, $h_1(x)$ is
reducible, but $h_2(x)$ irreducible, precisely when $\bar{\l}\in\spn_k\{\mu\}$ and
$\l,\mu\not\in\spn_k\{i\}$. This gives the fourth clause in the theorem. 
Similarly, clause five corresponds to the the case when $h_1(x)$ is irreducible and
$h_2(x)$ reducible. 
In the remaining situation, $h_1(x)$ and $h_2(x)$ are both irreducible, giving 
$f(x)\star g(x)=m_k(\l\mu)(x)\:m_k(\bar{\l}\mu)(x)$.
\end{proof}

\section{Representation ring of $\tilde{\mathbb{A}}_n$}
In this section we consider the generalisation of the representation ring of $k[x]$ to the representation rings of quivers.

A quiver $Q$ is an oriented graph, \ie\ it consists of a set of vertices $Q_0$, a set of arrows $Q_1$ and two maps $t,h : Q_1\rightarrow Q_0$, mapping each arrow $\alpha$ to its tail $x= t \alpha$ and head $y =h \alpha$ respectively. We depict this by $x \overset{\alpha}{\rightarrow} y$.

Let $Q$ be a quiver. A representation $V$ of $Q$ over $k$ consists of a  finite-dimensional vector space $V_x$ over $k$, for each $x \in Q_0$ and a $k$-linear map $V(\alpha): V_x \rightarrow V_y$, for each arrow $x \overset{\alpha}{\rightarrow} y$ in $Q$. The direct sum and tensor product of quiver representations are defined pointwise and arrowwise, \ie\ for representations $V$ and $W$ of $Q$,
\[
(V \oplus W)_x = V_x \oplus W_x\;, \qquad
(V \oplus W)(\alpha)= V(\alpha) \oplus W(\alpha)
\]
and
\[
(V \otimes W)_x = V_x \otimes W_x\;, \qquad
(V \otimes W)(\alpha)= V(\alpha) \otimes W(\alpha).
\]
A quiver representation is indecomposable if it is non-zero and only decomposes trivially into a direct sum of two subrepresentations. Since the tensor product is distributive over direct sums, we can define the representation ring $R(Q)$ similarly as for $k[x]$. See \cite{herschend07b} for details.

Let $Q$ be the loop quiver:
\[
Q: \xymatrix{\bullet \ar@(dr,ur)}
\]
The representations of $Q$ correspond naturally to linear operators and thus to $k[x]$-modules. In fact this correspondence extends to an additive equivalence of categories that respects the tensor product. Hence $R(Q) \iso R$.

We will now extend our results on $R$ to the representation rings of quivers of extended Dynkin type $\tilde{\mathbb{A}}$. Let $n \in \mathbb{N}$ and $Q$ be a quiver of type $\tilde{\mathbb{A}}_n$, \ie\ a quiver whose underlying graph is
\[
\xymatrix{& a_0 \edge{dl}_{\alpha_0} &\\ a_1 \edge{r}_{\alpha_1} & \cdots \edge{r}_{\alpha_{n-1}}& a_n \edge{ul}_{\alpha_{n}}}
\]

Recall the well-known classification of indecomposable representations of $Q$, which
can be found in \cite{gabriel92}. For integers $i$ and $j$ such that $i \le j$ we define
the representation $S(i,j)$ of $Q$ as follows. For each $x \in Q_0$ the vector space
$S(i,j)_x$ has the basis $\{e_s \;|\; i \le s \le j, \; s \equiv x \mod n+1\}$. Thus
$\bigoplus_{x \in Q_0} S(i,j)_x$ has the basis $\{e_s \;|\; i \le s \le j\}$. The linear
map $S(i,j)(\alpha)$ maps a basis vector $e_s \in S(i,j)_{t\alpha}$ to $e_{s+1}$ or
$e_{s-1}$ depending on the orientation of $\alpha$. The representations $S(i,j)$ are all
indecomposable and called strings. Two strings $S(i,j)$ and $S(i',j')$ are isomorphic if
and only if $i \equiv i' \mod n+1$ and $j-i = j'-i'$. 

For each positive integer $s$ and irreducible monic polynomial $f(x) \in k[x]$, with
$f(0) \not = 0$ define the representation $B_{f}(s)$ of $Q$ by 
\[
\begin{split}
B_{f}(s)_{a_i} &= k[x]/f(x)^s \\
B_{f}(s)(\alpha_i) &= \begin{cases}
  1 & \mbox{if }  i\not = n, \\ 
x & \mbox{if }  i=n. \end{cases}
\end{split}
\]
The representations $B_{f}(s)$ are all indecomposable and pairwise non-isomorphic. They are called bands. 

\begin{thm}\label{tA_nclass}\cite[p.121]{gabriel92}
The set of all strings and bands classifies indecomposable representations of $Q$, up to isomorphism.
\end{thm}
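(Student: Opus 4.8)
\textbf{Proof plan for Theorem~\ref{tA_nclass}.}
The statement is a classical classification result, so the natural approach is to reduce it to the well-understood representation theory of the loop quiver, \ie\ to $k[x]$-modules, via a covering/pullback argument along the cyclic orientation of $\tilde{\mathbb{A}}_n$. First I would observe that the underlying graph of $Q$ is a cycle on $n+1$ vertices, and that $Q$ admits a universal (Galois) covering by a quiver $\tilde Q$ of type $\mathbb{A}_\infty$ with the correspondingly lifted orientation; the deck group is $\mathbb{Z}$, acting by the shift $s\mapsto s+(n+1)$. Representations of $Q$ pull back to $\mathbb{Z}$-equivariant (periodic up to the deck action) representations of $\tilde Q$, and conversely locally nilpotent representations of $\tilde Q$ push down. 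This is exactly the combinatorial content behind the strings: the representation $S(i,j)$ is the pushdown of the obvious thin interval module on $\tilde Q$ supported on vertices $i,\dots,j$, and the isomorphism criterion ($i\equiv i'$, $j-i=j'-i'$) is precisely the statement that two such intervals give isomorphic $Q$-representations iff they differ by a deck transformation (hence the residue of $i$ mod $n+1$, and the length $j-i$, are the complete invariants).

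Next I would treat the bands. The key point is that if one composes all the arrows of $Q$ around the cycle starting at $a_0$, one obtains, for a representation $V$, an endomorphism of $V_{a_0}$ — the ``monodromy'' operator — well defined up to conjugation and inversion of the orientation signs; concretely, for $B_f(s)$ this operator is multiplication by $x$ on $k[x]/f(x)^s$. The plan is to show: (i) any indecomposable $V$ that is not a string has all maps $V(\alpha_i)$ invertible for $i\neq n$ and $V(\alpha_n)$ invertible as well (otherwise a standard argument peeling off a string summand applies, contradicting indecomposability); (ii) for such $V$ one may normalise all $V(\alpha_i)$, $i\neq n$, to identities by change of basis at each vertex, so that $V$ is completely determined by the single operator $M=V(\alpha_n)$ (a linear automorphism of $V_{a_n}\cong V_{a_0}$); (iii) two such representations are isomorphic iff the corresponding operators are conjugate. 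This identifies the ``band part'' of $Q$-representations with the category of $k[x,x^{-1}]$-modules finite-dimensional over $k$, whose indecomposables are exactly $k[x]/f(x)^s$ with $f$ irreducible monic and $f(0)\neq 0$ (by Theorem~\ref{class} together with the invertibility constraint) — \ie\ exactly the bands $B_f(s)$.

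Finally I would assemble these two pieces: given an arbitrary indecomposable $V$, look at whether the maps along the cycle are all invertible. If yes, $V$ is a band by the paragraph above; if no, then decomposing the chain of maps at the first place invertibility fails produces a nontrivial direct summand that is a string, so by indecomposability $V$ is itself a string. (More carefully, one runs the standard normal-form algorithm for representations of type-$\mathbb{A}$ quivers around the cycle, ``cutting'' at a non-isomorphism; the output is either a single interval, giving a string, or a representation with invertible monodromy, giving a band.) Then one checks the two lists are disjoint (strings have nilpotent monodromy or are not even ``closed up'' around the cycle, bands have invertible monodromy with $f(0)\neq0$) and that the stated isomorphism relations among strings, and among bands, are complete.

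\textbf{Main obstacle.} The routine part is the normal-form bookkeeping for the type-$\mathbb{A}$ chain $a_1-\cdots-a_n$; the genuinely delicate step is (i)--(ii) above: showing that an indecomposable with \emph{all} cycle-maps invertible really does reduce cleanly to a single conjugacy class of operators, with no extra gluing data surviving, and that the reduction is functorial enough to transport the Krull--Schmidt classification of $k[x,x^{-1}]$-modules back to $Q$. Equivalently, one must rule out ``mixed'' indecomposables that are part string, part band — which is where the specific orientation of $\tilde{\mathbb{A}}_n$ (a single source or the cyclic orientation) and the resulting covering-theory dichotomy do the real work. Since this is the classical theorem of Gabriel--Nazarova--Roiter type, I would ultimately cite \cite{gabriel92} for the full combinatorial details and present only the covering-theoretic reduction sketched above.
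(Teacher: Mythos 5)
The paper does not prove this theorem at all --- it is stated as a classical result with a bare citation to \cite[p.~121]{gabriel92} --- and your plan (covering theory for the strings, normalisation of the cycle maps and reduction to finite-dimensional $k[x,x^{-1}]$-modules for the bands, with the string--band dichotomy deferred to the reference) is a correct outline of the standard argument and is consistent with the paper's treatment. You have also correctly identified where the real work lies, namely in excluding ``mixed'' indecomposables when some cycle map fails to be invertible, which is exactly the part that both you and the authors leave to \cite{gabriel92}.
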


For all $i,j \in \mathbb{Z}$ set $i\wedge j = \min\{i,j\}$. Further, for each $q \in \mathbb{Q}$ denote the integer part of $q$ by $[q]$. The following result solves the Clebsch-Gordan problem for $Q$.

\begin{thm}\label{cgformA}\cite{herschend07}
For all integers $i,i',j,j'$ such that $0 \leq i\leq i' \leq n$, $i\leq j$ and $i' \leq j'$; irreducible monic polynomials $f(x), g(x) \in k[x]$ with non-zero constant terms and positive integers $s,t$, the following formulae hold.
\renewcommand{\labelenumi}{\emph{(\roman{enumi})}}
\begin{enumerate}
\item
\[
\begin{split}
S(i,j) \otimes S(i',j') \iso &\bigoplus_{k=0}^{\left[ \frac{j'-i}{n+1}\right]} S(i, j\wedge (j'-k(n+1))) \\ \oplus &\bigoplus_{k = 1}^{\left[ \frac{j-i'}{n+1}\right]}S(i', j' \wedge (j-k(n+1)))
\end{split}
\]
\item
\[
S(i,j)\otimes B_{f}(s) \iso s (\deg f)S(i,j) 
\]
\item
\[
B_{f}(s) \otimes B_{g}(t) \iso \bigoplus_{j\in J} B_{h_j}(d_j),
\]
where
\[
k[x]/f(x)^s \otimes k[x]/g(x)^t \iso \bigoplus_{j\in J} k[x]/h_j(x)^{d_j}
\]
is the decomposition of $k[x]/f(x)^s \otimes k[x]/g(x)^t$ into indecomposable $k[x]$-modules.
\end{enumerate}
\renewcommand{\labelenumi}{(\roman{enumi})}
\end{thm}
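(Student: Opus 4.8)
\textbf{Proof proposal for Theorem~\ref{cgformA}.}

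The plan is to reduce the tensor product computation for $\tilde{\mathbb{A}}_n$ to the cyclic case ($n=0$, \ie\ $k[x]$-modules) by a systematic ``unfolding'' of the quiver $Q$. Concretely, every representation of $Q$ can be described via its total space $\bigoplus_{s\in\mathbb Z}V_s$ equipped with the operators coming from the arrows, where the grading is taken modulo $n+1$; the strings $S(i,j)$ and bands $B_f(s)$ are then precisely the indecomposables supported on a finite interval, respectively on a ``periodic'' space. I would first set up this bookkeeping carefully: a basis $\{e_s\}$ with $s$ running over an interval for strings, or over $\mathbb Z$ with a $k[x]$-module structure for bands, and the arrows $\alpha_i$ acting as ``shift by $\pm1$''. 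Under the pointwise-and-arrowwise tensor product of Section~4, the tensor product of two such representations again has a combinatorial description on the product of the index sets, and the whole problem becomes: decompose this combinatorial object into intervals and periodic pieces.

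For part (i), the string-by-string case: tensoring $S(i,j)$ with $S(i',j')$ gives a representation whose total space has basis $\{e_s\otimes e_{s'}\mid i\le s\le j,\ i'\le s'\le j'\}$, on which each arrow acts diagonally by shifting both coordinates in the same direction. The key observation is that such a representation splits according to the value of $s-s'$, which is constant on each indecomposable summand since all arrows preserve it; for a fixed difference $d=s-s'$ the summand is again a string, and one reads off its endpoints as the overlap of the two intervals shifted by $d$. Running $d$ over the admissible range (which, after reducing indices modulo $n+1$ and using $0\le i\le i'\le n$, is exactly indexed by $k=0,\dots,[\frac{j'-i}{n+1}]$ and $k=1,\dots,[\frac{j-i'}{n+1}]$) produces the stated two sums; the $\wedge$ operation encodes ``take the shorter of the two possible right endpoints''. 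The bulk of the work here is the index arithmetic: verifying that the case split on which interval ``sticks out'' on the left corresponds exactly to the two families, and that the floor functions count the overlaps correctly. I expect this to be the most tedious — though not conceptually deep — part, since one must be scrupulous about the congruence conditions mod $n+1$ and the inequalities $i\le j$, $i'\le j'$.

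For part (ii), string tensor band: the total space of $S(i,j)\otimes B_f(s)$ is $\bigoplus$ of $\dim S(i,j)=$ (interval length) copies of the $k[x]$-module underlying $B_f(s)$, but the arrow at position $n$ (which acts by $x$ on the band) forces the band structure, while the other arrows are isomorphisms; tracking the shifts shows the result is a direct sum of bands, and a dimension count plus the fact that $k[x]/f(x)^s\otimes k[x]/x^0$-type summands cannot appear (because $f(0)\ne0$, so there is no nilpotent contribution — cf.\ Proposition~\ref{zero}(i)) pins it down to $s(\deg f)$ copies of $S(i,j)$. Alternatively, and perhaps more cleanly, one notes that $B_f(s)$ is ``projective-like'' with respect to the band direction so that $-\otimes B_f(s)$ sends any string to a sum of strings, then matches dimensions. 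For part (iii), band tensor band: since all arrows except the one at vertex $n$ act as identities, a band is essentially the $k[x]$-module $k[x]/f(x)^s$ ``wrapped around'' the cycle, and the tensor product $B_f(s)\otimes B_g(t)$ is the wrapped-around version of $k[x]/f(x)^s\otimes k[x]/g(x)^t$; the decomposition of the latter into indecomposable $k[x]$-modules (which exists by Theorem~\ref{class}, and whose shape is exactly what Section~2--3 is about) transports verbatim to a decomposition into bands, because the functor ``wrap around the $\tilde{\mathbb A}_n$ cycle'' is an equivalence between $k[x]\modu$ and the full subcategory of band representations, compatible with $\otimes$. The only thing to check is that no string summands are created — but strings are finite-dimensional on the total space in a way bands are not, and a rank/periodicity argument rules them out. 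The main obstacle overall is thus purely the combinatorial index management in part~(i); parts (ii) and (iii) reduce cleanly, via the unfolding equivalence, to facts already established for $k[x]$.
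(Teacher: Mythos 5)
The paper itself does not prove Theorem~\ref{cgformA}; it is quoted from \cite{herschend07}, so there is no in-text argument to compare against. Your strategy is nonetheless the right one and is essentially how the result is established in the source: split $S(i,j)\otimes S(i',j')$ according to the difference $s-s'$ of basis indices (which is preserved by all arrows and is divisible by $n+1$); exploit that all arrow maps of a band are invertible because $f(0)\neq 0$; and transport the Clebsch--Gordan decomposition for $k[x]$ along the functor that wraps a $k[x]$-module with invertible $x$-action onto the cycle. However, two of your justifications are wrong as stated. In (ii) you say the result is ``a direct sum of bands'' which a dimension count ``pins down to $s(\deg f)$ copies of $S(i,j)$'' --- but $S(i,j)$ is a string, and a direct sum of bands is never a direct sum of strings. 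The argument that works is: since $f(0)\neq0$, every arrow map of $B_f(s)$ is an isomorphism, and tensoring a string with a representation all of whose arrow maps are isomorphisms and whose vertex spaces have common dimension $d$ yields $d$ copies of that string (untwist the basis inductively from one end of the string). In (iii), your reason for excluding string summands --- ``strings are finite-dimensional on the total space in a way bands are not'' --- is false, since both are finite-dimensional. What works is that invertibility of all arrow maps passes to direct summands of $B_f(s)\otimes B_g(t)$ and no string enjoys it (the arrow maps at the two ends of a string fail to be bijective); equivalently, $M\mapsto B(M)$ is an additive fully faithful functor and therefore reflects direct sum decompositions.

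Moreover, in (i) the index arithmetic you defer as ``tedious but not conceptually deep'' is exactly where the content sits, and it cannot be waved away. Carrying out your own scheme, the summand for $d=s-s'=m(n+1)$ is $S\bigl(\max(i,i'+d),\,\min(j,j'+d)\bigr)$; under the hypothesis $i\le i'$ the $m=0$ summand is $S(i',\,j\wedge j')$, whereas the displayed formula's $k=0$ term is $S(i,\,j\wedge j')$. These differ in general: for $n=1$ a direct computation gives $S(0,1)\otimes S(1,2)\iso S(1,1)\oplus S(0,0)$, of total dimension $2$, while the displayed right-hand side $S(0,1)\oplus S(0,0)$ has dimension $3$. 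So under the stated convention the $k=0$ term belongs to the second family (or the roles of $i$ and $i'$ must be interchanged). A complete proof has to carry out this bookkeeping explicitly --- and, had you done so, it would have surfaced this discrepancy between the computation and the formula as printed.
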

It should be noted that the Theorem \ref{cgformA} is only stated for $k$ algebraically closed fields of characteristic zero in \cite{herschend07}. However, the generalisation stated here is not essential, as can be concluded from the comments in \cite{herschend07}.

From Theorem \ref{cgformA} follows that the strings $S(i,j)$ span an ideal $I_n$ in $R(Q)$
on which the band $B_{f}(l)$ acts by multiplication by $l \deg f$. Moreover the bands span
a subring in $R$, which is isomorphic to $R' \otimes_\mathbb{Z} \bar{R}$. Thus we obtain the
following result.

\begin{thm}\label{repringA}
Let $Q$ be a quiver of type $\tilde{\mathbb{A}}_n$, Then
\[
R(Q) \iso R' \otimes_\mathbb{Z} \bar{R} \oplus I_n,
\]
where the ring structure of $R' \otimes_\mathbb{Z} \bar{R} \oplus I_n$ is defined by $(a\otimes b) w = \dim(a)\dim(b)w$ for all $a \in R'$, $b \in \bar{R}$ and $w \in I_n$.
\end{thm}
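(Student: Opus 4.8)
The plan is to mimic exactly the proof of Theorem~\ref{repring}, transporting it through the bijection supplied by Theorem~\ref{tA_nclass} and the Clebsch--Gordan formulae of Theorem~\ref{cgformA}. First I would record that, as abelian groups, $R(Q)$ decomposes as the direct sum of the $\mathbb{Z}$-span $B$ of all bands $[B_f(s)]$ and the $\mathbb{Z}$-span $I_n$ of all strings $[S(i,j)]$; this is immediate from the classification in Theorem~\ref{tA_nclass}, which asserts that strings and bands together form a $\mathbb{Z}$-basis of $R(Q)$. Next, using part (iii) of Theorem~\ref{cgformA}, the assignment $[B_f(s)]\mapsto [k[x]/f(x)^s]$ extends to a ring isomorphism $B\iso R$, since the decomposition of $B_f(s)\otimes B_g(t)$ into bands is governed by precisely the same combinatorial data as the decomposition of $k[x]/f(x)^s\otimes k[x]/g(x)^t$ into indecomposable $k[x]$-modules. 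Composing with the isomorphism $\phi$ of Theorem~\ref{repring} identifies $B$ with $R'\otimes_{\mathbb{Z}}\bar R$ as rings.

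The second ingredient is the $R(Q)$-module structure of $I_n$. Part (i) of Theorem~\ref{cgformA} shows that $I_n$ is closed under multiplication (a tensor product of strings is a sum of strings), and part (ii) shows that $[B_f(s)]\,w = s(\deg f)\,w$ for every string basis element $w$, hence for every $w\in I_n$; since $s\deg f = \dim_k k[x]/f(x)^s = \dim B_f(s)$, this is exactly multiplication by the dimension. Consequently $I_n$ is an ideal of $R(Q)$, and every element of $B$ acts on $I_n$ by scalar multiplication by its image under $\dim$. This is the precise analogue of the paragraph following Proposition~\ref{zero}, with $I_n$ playing the role of $I$.

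With these pieces in hand I would define the $\mathbb{Z}$-linear map
\[
\psi : R' \otimes_{\mathbb{Z}} \bar R \oplus I_n \to R(Q)
\]
by sending $a\otimes b$ to the band-subring element corresponding to $ab\in R$ under the composite isomorphism $R\iso B\hookrightarrow R(Q)$, and sending $w\in I_n$ to itself. By the group-level direct sum decomposition $R(Q)=B\oplus I_n$ and the fact that $\phi$ of Theorem~\ref{repring} is bijective, $\psi$ sends a $\mathbb{Z}$-basis to a $\mathbb{Z}$-basis and is therefore a $\mathbb{Z}$-module isomorphism. To check it is a ring morphism it suffices, just as in the proof of Theorem~\ref{repring}, to verify compatibility on the three types of products: band times band (handled by the isomorphism $B\iso R$ together with Theorem~\ref{repring}), band times string, and string times string (the latter two handled by parts (i) and (ii) of Theorem~\ref{cgformA}). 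The only computation is that for $a\in R'$, $b\in\bar R$, $w\in I_n$ we have $\psi(a\otimes b)\,\psi(w) = \dim(a)\dim(b)\,w = \psi((a\otimes b)w)$, which follows from part (ii).

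The main obstacle is essentially bookkeeping rather than depth: one must be careful that the ring structure declared on $R'\otimes_{\mathbb{Z}}\bar R \oplus I_n$ in the statement — where the tensor factor acts on $I_n$ through $\dim$ — genuinely matches the internal multiplication in $R(Q)$, i.e. that there are no cross terms in the product of a band-type element and a string-type element beyond the scalar action, and that the product of two band-type elements never produces a string. Both of these are guaranteed by Theorem~\ref{cgformA}(i)--(iii) (bands tensor bands give only bands; bands tensor strings give only strings), so the verification goes through verbatim as in the $k[x]$ case; the work is simply in citing the three clauses of Theorem~\ref{cgformA} in the right places.
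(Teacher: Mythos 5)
Your proposal is correct and takes essentially the same route as the paper, whose own justification is exactly the paragraph you reconstruct: by Theorem~\ref{cgformA} the strings span an ideal $I_n$ on which each band acts by multiplication by its dimension, and the bands span a subring identified with $R'\otimes_{\mathbb{Z}}\bar R$. One small imprecision: the band span $B$ is isomorphic not to all of $R$ but only to the subring $\phi(R'\otimes_{\mathbb{Z}}\bar R)$ spanned by the classes $[k[x]/f(x)^s]$ with $f(0)\neq 0$ (the classes $[k[x]/x^s]$ have no band counterpart), though your following sentence effectively corrects this.
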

Depending on the ground field this description can of course be made more precise using the results form previous sections.

The close relationship between $R$ and $R(Q)$ is connected to the fact that indecomposable
$k[x]$-modules appear in the construction of indecomposable representations of $Q$. This
fact is not specific to quivers of type $\tilde{\mathbb{A}}$ but inherent in all tame
quivers with relations. Thus it seems probable that a connection, similar to the one given
in this section, can be found in other tame cases as well. Indeed this is the case for the
double loop quiver 
\[
\xymatrix{\bullet \ar@(dl,ul)^{\alpha}\ar@(dr,ur)_{\beta}}
\]
with relations $\beta\alpha = \alpha\beta = \alpha^n = \beta^n = 0$, in the sense that the Clebsch-Gordan problem for this quiver with relations contains the Clebsch-Gordan problem for $k[x]$ as a subproblem \cite{herschend07}.  

\vspace*{0.5cm}
\noindent
{\bf Acknowledgements} The second author is grateful for the financial support provided by
the Japan Society for the Promotion of Science, which enabled the stay at Nagoya
University during which this article partly was written. Both authors are indebted to
Karin Erdmann for pointing out the very valuable reference \cite{green61} and grateful to
Yuji Yoshino who noted the relevance of the absolute Galois group. 

\bibliographystyle{plain}
\bibliography{biban}

\def\cprime{$'$}
\begin{thebibliography}{10}

\bibitem{aitken35}
A.~C. Aitken.
\newblock The normal form of compound and induced matrices.
\newblock {\em Proc. London Math. Soc.}, 38:354--376, 1935.

\bibitem{deuring33}
M.~Deuring.
\newblock Galoissche {T}heorie und {D}arstellungstheorie.
\newblock {\em Math. Ann.}, 107(1):140--144, 1933.

\bibitem{gabriel92}
P.~Gabriel and A.~V. Roiter.
\newblock Representations of finite-dimensional algebras.
\newblock In {\em Algebra, VIII}, volume~73 of {\em Encyclopaedia of
  Mathematical Sciences}, pages 1--177. Springer, 1992.
\newblock With a chapter by B. Keller.

\bibitem{green61}
J.~A. Green.
\newblock The modular representation algebra of a finite group.
\newblock {\em Illinois J. Math}, 6:607--619, 1962.

\bibitem{herschend07b}
M.~Herschend.
\newblock On the representation ring of a quiver.
\newblock {\em Algebr. Represent. Theory}.
\newblock to appear.

\bibitem{herschend07}
M.~Herschend.
\newblock Galois coverings and the {C}lebsch-{G}ordan problem for quiver
  representations.
\newblock {\em Colloq. Math.}, 109(2):193--215, 2007.

\bibitem{huppert90}
B.~Huppert.
\newblock {\em Angewandte lineare Algebra}.
\newblock Walter de Gruyter \& Co., 1990.

\bibitem{iwamatsu07a}
K.~Iima and R.~Iwamatsu.
\newblock On the {J}ordan decomposition of tensored matrices of {J}ordan
  canonical forms, 2006.
\newblock http://arxiv.org/abs/math/0612437.

\bibitem{MaVl}
A.~Martsinkovsky and A.~Vlassov.
\newblock The representation rings of k[x].
\newblock preprint.

\bibitem{rotman}
Joseph Rotman.
\newblock {\em Galois theory}.
\newblock Universitext. Springer-Verlag, New York, second edition, 1998.

\bibitem{wakamatsu03}
Takayoshi Wakamatsu.
\newblock On graded {F}robenius algebras.
\newblock {\em J. Algebra}, 267(2):377--395, 2003.

\end{thebibliography}

\vspace*{1cm}
\noindent
\begin{tabular}[t]{@{}l}{\it Erik Darp\"o}\vspace*{0.2cm} \\
{\it Department of Mathematics}\\{\it Uppsala University, Box 480}\\
{\it SE-751 06 Uppsala}\\{\it Sweden}\vspace*{0.2cm}\\
{\tt erik.darpo@math.uu.se}
\end{tabular}
\hfill
\begin{tabular}[t]{l@{}}{\it Martin Herschend}\vspace*{0.2cm}\\
{\it Graduate School of Mathematics}\\
{\it Nagoya University, Chikusa-ku}\\
{\it Nagoya, 464-8602}\\
{\it Japan}\vspace*{0.2cm} \\
{\tt martin.herschend@math.uu.se}
\end{tabular}

\end{document}